\documentclass[10pt]{article}
\usepackage[all]{xy}
\usepackage{amsfonts,amsmath,oldgerm,amssymb,amscd}
\newcommand{\ra}{\rightarrow}		
\newcommand{\by}[1]{\stackrel{#1}{\ra}}

\newcommand{\surj}{\ra\!\!\!\ra}	
\newcommand{\ol}{\overline}		\newcommand{\wt}{\widetilde}
\newcommand{\iso}{\by \sim}

\newtheorem{theorem}{Theorem}[section]
\newtheorem{proposition}[theorem]{Proposition}
\newtheorem{lemma}[theorem]{Lemma}

\newtheorem{corollary}[theorem]{Corollary}
\newtheorem{question}[theorem]{Question}

\newcommand{\ga}{\alpha}	
		\newcommand{\gd}{\delta}
	
\newcommand{\gj}{\blacksquare}

\newcommand{\gt}{\theta}

	\newcommand{\gD}{\Delta}

	\newcommand{\BN}{\mbox{$\mathbb N$}}

	\newcommand{\CP}{\mbox{$\mathcal P$}}

\newcommand{\ma}{\mbox{$\mathfrak a$}}

\newcommand{\mm}{\mbox{$\mathfrak m$}}	
	\newcommand{\pri}{\mbox{$\mathfrak p$}}

\newcommand{\ot}{\mbox{\,$\otimes$\,}}	\newcommand{\op}{\mbox{$\oplus$}}
\newcommand{\Spec}{\mbox{\rm Spec\,}}	\newcommand{\hh}{\mbox{\rm ht\,}}

\newcommand{\Um}{\mbox{\rm Um}}

\oddsidemargin .3 in		\evensidemargin .3 in
\marginparwidth 50pt 		\marginparsep 15pt
\headsep .5in			\textwidth 15.5cm
\textheight 20cm		\topmargin 0.5cm
\topskip 0.5cm			\brokenpenalty=10000
\clubpenalty=1000		\widowpenalty=1000
\hyphenpenalty=1000		
\flushbottom


\begin{document}

\begin{center}
 {\Large \bf Serre dimension and Euler class groups of  overrings  of  polynomial rings}\\\vspace{.2in}
  {\large Manoj K. Keshari and Husney Parvez Sarwar }\\
\vspace{.1in}
{\small
Department of Mathematics, IIT Bombay, Powai, Mumbai - 400076, India;\;\\
    (keshari, parvez)@math.iitb.ac.in}
\end{center}

\begin{abstract}
 
Let $R$ be a commutative Noetherian ring of dimension $d$ and
$B=R[X_1,\ldots,X_m,Y_1^{\pm 1},\ldots,Y_n^{\pm 1}]$ a Laurent
polynomial ring over $R$. If $A=B[Y,f^{-1}]$ for some $f\in R[Y]$,
then we prove the following results:

$(i)$ If $f$ is a monic polynomial, then Serre dimension of $A$ is
$\leq d$. In case $n=0$, this result is due to Bhatwadekar, without
the condition that $f$ is a monic polynomial.

$(ii)$ The $p$-th Euler class group $E^p(A)$ of $A$, defined by
Bhatwadekar and Raja Sridharan, is trivial for $p\geq max \{d+1,
\dim A -p+3\}$. In case $m=n=0$, this result is due to Mandal-Parker.
\end{abstract}

\section{Introduction}
{\it In this paper, we will assume that all rings are commutative
  Noetherian of finite Krull dimension, all modules are finitely
  generated and all projective modules are of constant rank. } {\bf
  Throughout this paper, $R$ will denote a ring of dimension $d$ and
  $B$ will denote the Laurent polynomial ring $R[X_1,\ldots,X_m,Y_1^{\pm
      1},\ldots, Y_n^{\pm 1}]$ over $R$.}

Let $P$ be a projective $R$-module. An element $p\in P$
is said to be {\it unimodular} if there exist $\phi\in Hom(P,R)$ such
that $\phi(p)=1$. We write $\Um (P)$ for the set of all unimodular
elements of $P$. We say that {\it Serre dimension} of $R$ is $\leq t$ if every
projective $R$-module of rank $\geq t+1$ has a unimodular element.  

It is a classical result due to Serre \cite{Serre58} that Serre
dimension of $R$ is $\leq d$. Let $R[Y]$ be a polynomial ring in one
variable over $R$, $P$ a projective $R[Y]$-module and $f$ any monic
polynomial in $R[Y]$. If we further assume that $R$ is a local ring,
then Horrocks (\cite{H}, Theorem 1) proved that projective
$R[Y,f^{-1}]$-module $P_f$ is free implies $P$ itself is free.
Quillen extended Horrocks' theorem to arbitrary ring $R$ (\cite{Qu},
Theorem 3). Using this result, Quillen (\cite{Qu}, Theorem 4) proved
Serre's conjecture that projective modules over polynomial rings
$k[X_1,\ldots,X_m]$ over a field $k$ are free for all $m\geq 1$. In
other words, Serre dimension of $k[X_1,\ldots,X_m]$ is $0$.

Plumstead generalized Serre's result (\cite{P}, Theorem 2) by proving
that Serre dimension of $R[Y]$ is $\leq d$. Rao generalized
Plumstead's result (\cite{Rao82}, Theorem 1.1) and proved that if $C$
is a birational overring of $R[Y]$, i.e. $R[Y]\subset C \subset
qf(R[Y])=S^{-1}R[Y]$, where $S$ is the set of all non-zerodivisors of
$R[Y]$, then Serre dimension of $C$ is $\leq d$. As a consequence of
Rao's result, we get that Serre dimension of $R[Y,f^{-1}] \leq d$ for
any non-zerodivisor $f\in R[Y]$.

Bhatwadekar-Roy (\cite{BhR}, Theorem 3.1) generalized Plumstead's
result to polynomial rings in many variables and proved that Serre
dimension of polynomial ring $R[X_1,\ldots,X_m]$ is $\leq d$ for any
$m\geq 1$. This result of Bhatwadekar-Roy was generalized by
Bhatwadekar-Lindel-Rao (\cite{BLR}, Theorem 4.1) to Laurent polynomial
case. They proved that Serre dimension of Laurent polynomial ring
$B:=R[X_1,\ldots, X_m,Y_1^{\pm 1},\ldots,Y_n^{\pm 1}]$ is $\leq d$.

Bhatwadekar (\cite{B1}, Theorem 3.5) further generalized above result of
Bhatwadekar-Roy to polynomial extensions over a birational overring of
$R[Y]$. More precisely, he proved that if $C$ is a birational overring
of $R[Y]$, then Serre dimension of $C[X_1,\ldots,X_m]$ is $\leq d$. As
a consequence of this result, we get that Serre dimension of
$R[X_1,\ldots,X_m,Y,f^{-1}]$ is $\leq d$ for any non-zerodivisor $f\in
R[Y]$. 

It is natural to ask if analogue of Bhatwadekar's result \cite{B1} is true for
Laurent polynomial rings. More precisely, we can ask the following.

\begin{question}
Let $C$ be a birational
overring of $R[Y]$. Is Serre dimension of
$C[X_1,\ldots,X_m,Y_1^{\pm 1},\ldots,Y_n^{\pm 1}]$ $\leq d$?
\end{question} 

We answer this question when $C=R[Y,f^{-1}]$ with $f\in R[Y]$ a monic
polynomial. Note that Lindel \cite{L95} gave another proof of
Bhatwadekar-Lindel-Rao's result (\cite{BLR}, Theorem 4.1) mentioned
above. Our proof closely follows Lindel's idea. We state our result.
 
 \begin{theorem}\label{T1}
Let $A=B[Y,f^{-1}]$, where $f\in R[Y]$ is a monic polynomial. Then
Serre dimiension of $A$ is $\leq d$.
 \end{theorem}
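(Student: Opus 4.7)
The plan is to prove Theorem \ref{T1} by induction on $n$, the number of Laurent polynomial variables, closely following Lindel's streamlined argument \cite{L95} for the theorem of Bhatwadekar--Lindel--Rao \cite{BLR}.

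For the base case $n = 0$, the ring $A = R[X_1, \ldots, X_m, Y, f^{-1}]$ is a polynomial extension of the birational overring $R[Y, f^{-1}]$ of $R[Y]$; hence Bhatwadekar's theorem (\cite{B1}, Theorem 3.5) immediately yields that the Serre dimension of $A$ is at most $d$. Note that monicity of $f$ is not used at this stage.

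For the inductive step $n \geq 1$, set $T = Y_n$ and let $A' = R[X_1, \ldots, X_m, Y_1^{\pm 1}, \ldots, Y_{n-1}^{\pm 1}, Y, f^{-1}]$, so that $A = A'[T, T^{-1}]$. Let $P$ be a projective $A$-module of rank $r \geq d + 1$. Specializing at $T = 1$ yields a projective $A'$-module $\overline{P} = P/(T-1)P$ of rank $r$, which by the inductive hypothesis admits a unimodular element $\overline{p}$. The essential task is to lift $\overline{p}$ to a unimodular element of $P$. One first lifts $\overline{p}$ arbitrarily to some $p_0 \in P$; the order ideal $O_P(p_0) \subset A$ then satisfies $O_P(p_0) + (T-1)A = A$, and the remaining problem is to modify $p_0$ to a globally unimodular element. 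This modification is effected by Quillen's local-global principle in the variable $T$ combined with a Horrocks-type extension: after localizing at a maximal ideal of $R$, one extends $P$ to a projective $A'[T]$-module $\widetilde{P}$ whose restriction at $T = 1$ is $\overline{P}$, produces a unimodular element of $\widetilde{P}$ via a monic-polynomial inversion argument in $T$, and patches back to $P$. The monicity of $f$ in $Y$ is crucial here: when one reduces modulo a maximal ideal of $R$, the image $\overline{f}$ remains monic, so Horrocks' theorem is available in the $Y$-direction, and this is what makes the patching consistent with Serre dimension $d$ rather than $d+1$.

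The principal obstacle is precisely this last point. The ring $R[Y, f^{-1}]$ has Krull dimension $d + 1$ in general, so a direct application of \cite{BLR} Theorem 4.1 with $R[Y, f^{-1}]$ as the base ring would yield only Serre dimension $\leq d + 1$. Monicity of $f$ must therefore be leveraged in coordination with the Laurent-patching argument in $T$ to sharpen the bound by one; orchestrating Horrocks' theorem in the $Y$-direction together with monic inversion in the $T$-direction at every inductive step forms the technical core of the proof, and this is where the deviation from the original \cite{BLR}/\cite{L95} argument requires the most care.
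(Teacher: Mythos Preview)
Your proposal is a sketch rather than a proof, and the inductive step as outlined does not go through. You propose to specialize at $T=Y_n=1$, invoke the inductive hypothesis on $A'$, and then lift the unimodular element back to $P$ via ``Quillen's local--global principle in $T$ combined with a Horrocks-type extension.'' But none of these ingredients are actually available in the form you need. First, there is no general procedure to extend a projective $A'[T,T^{-1}]$-module $P$ to a projective $A'[T]$-module $\widetilde P$ with prescribed fibre at $T=1$; Quillen--Suslin patching produces \emph{isomorphisms}, not extensions across $T=0$. Second, a Horrocks or monic-inversion argument in the $T$-direction would require the order ideal $O_P(p_0)$ to contain a monic polynomial in $T$, and nothing in your setup produces one---the ideal only has height $\geq r>d$, which controls at most the $X_i$ and $Y$ directions. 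Third, your explanation of where the monicity of $f$ enters (``Horrocks' theorem is available in the $Y$-direction after reducing modulo a maximal ideal of $R$'') is disconnected from the rest of the argument: Horrocks' theorem concerns freeness of projective modules, not lifting unimodular elements, and you never say how an application of it in $Y$ would interact with the patching in $T$. Your own closing sentence concedes that this coordination is the technical core and remains unwritten.

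The paper's proof is organized quite differently and does \emph{not} induct on $n$. It follows Lindel's analytic-isomorphism method \cite{L95} directly: choose $s\in R$ with $P_s$ free and build $s$-dual submodules $F\subset P$, $G\subset P^*$; observe that $A/(s(Y-1))$ is a Laurent polynomial ring over the $d$-dimensional ring $R[Y,f^{-1}]/(s(Y-1))$, so the full Bhatwadekar--Lindel--Rao theorem (not the inductive hypothesis) gives a unimodular element $\bar p$ in $P/s(Y-1)P$; lift $\bar p$ to $p\in P$ and adjust so that $O_P(p)$ has height $\geq r$. The monicity of $f$ is then used once, via Proposition~\ref{4p2}, to find an $R[Y,f^{-1}]$-automorphism of $A$ after which $O_P(p)$ contains both a monic polynomial in $Y$ over $B$ and an element of the form $1+fh$. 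These two witnesses, together with an element $1+b's$ coming from integrality over $B$, are fed into Lindel's $\delta$-semilinear map machinery (an $s^2$-analytic endomorphism $\delta$ of $B[Y]$ sending $Y\mapsto Y+s^2b'^2(1-Y)$) to produce a genuine unimodular element of $P$. The variable $Y$ carrying the monic $f$---not any Laurent variable $Y_j$---is the one along which the reduction and the monic-polynomial argument take place.
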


Assume $\dim R=d\geq 3$ and $p$ is a positive integer such that $p\geq
d-p+3$. Then Bhatwadekar and Raja Sridharan have defined the $p$-th
Euler class group $E^p(R)$ of $R$ which is an additive abelian
group. We will not give the explicit definition of $E^p(R)$ (see
\cite{BR1}, section 4 for definition). Rather we will describe the
elements of $E^p(R)$, since this suffices for our purpose. Let $I$ be
an ideal of $R$ of height $p$ such that the $R/I$-module $I/I^2$ is
generated by $p$ elements. Let $\phi:(R/I)^p \surj I/I^2$ be a
surjection, giving a set of $p$ generators of $R/I$-module
$I/I^2$. The surjection $\phi$ induces an element of the $p$-th Euler
class group $E^p(R)$, denote it by the pair $(I,\phi)$. Further, it
follows using {\it moving lemma and addition principle}, that every
element of $E^p(R)$ is a pair $(I,\phi)$ for some height $p$ ideal $I$
of $R$ and some surjection $\phi : (R/I)^p \surj I/I^2$.  Bhatwadekar
and Raja Sridharan (\cite{BR1}, Theorem 4.2) proved that there exist a
surjection $\Phi:R^p \surj I$ which is a lift of $\phi$, i.e. $\Phi\ot
A/I =\phi$, if and only if the associated element $(I,\phi)$ of the
group $E^p(R)$ is the trivial element (identity element $0$ of
$E^p(R)$).

It is well known that a projective $R$-module of rank $d$ need
not, in general, have a unimodular element. The significance of Euler
class group theory is demonstrated by the following result of
Bhatwadekar-Raja Sridharan \cite{BR2}, where they proved that for a
rank $d$ projective $R$-module $P$ with trivial determinant, 
the precise obstruction for $P$ to have a unimodular element lies
in $E^d(R)$. More precisely, given a pair $(P,\chi)$, where $\chi:
\wedge^d P \iso R$ is an isomorphism, they associate an element
$e(P,\chi)$ of the Euler class group $E^d(R)$ and prove that $P$ has a
unimodular element if and only if $e(P,\chi)$ is the trivial element
of $E^d(R)$. Such an obstruction theory is not known for projective
$R$-modules of rank $d-1$ except for some special class of rings.
When $R=S[Y]$ is a polynomial ring in one variable over some
subring $S$ of $R$, then Das \cite{D1} proved that for a rank $d-1$
projective $R$-module $Q$ with trivial determinant, the precise
obstruction for $Q$ to have a unimodular element lies in $E^{d-1}(R)$.

Let $I$ be an ideal of $R[Y]$ containing a monic polynomial in the
variable $Y$. Assume $R[Y]/I$-module $I/I^2$ is generated by $p$
elements, where $p\geq \dim (R[Y]/I) +2$. Then Mandal (\cite{Man},
Theorem 2.1) proved that any surjection $\phi:(R[Y]/I)^p \surj I/I^2$
can be lifted to a surjection $\Phi:R[Y]^p \surj I$. Let $P=Q\op R$ be
a projective $R$-module of rank $p$ and $\psi:P[Y]/IP[Y] \surj I/I^2$
be a surjection, then Bhatwadekar-Raja Sridharan (\cite{BR2},
Proposition 3.3) proved that $\psi$ lifts to a surjection $\Psi:P[Y]
\surj I$, thus generalizing Mandal's result.
If we further
assume that height of $I$ is $p$ and $2p \geq \dim R[Y]+3$, then to 
the surjection $\phi$, we can associate an element $(I,\phi)\in
E^p(R[Y])$. Since $\Phi$ is a surjective lift of $\phi$, by
(\cite{BR1}, Theorem 4.2), we get that $(I,\phi)$ is a trivial element of
$E^p(R[Y])$. 

Let $A=R[X_1,\ldots,X_m]$ be a polynomial ring over $R$ and $I$ an
ideal of $A$ of height $\geq d+1$. Let $p\geq max\{\dim (A/I)+2,d+1\}$
be an integer and $\phi:(A/I)^p \surj I/I^2$ be a surjection. Since height
of $I>d$, by Suslin (\ref{4t1}), there exist an automorphism $\Theta$
of $A$ such that $\Theta(I)$ contains a monic polynomial in $X_m$ with
coefficients from $R[X_1,\ldots,X_{m-1}]$. Therefore replacing $I$ by
$\Theta(I)$, we may assume that $I$ contains a monic polynomial in
$X_m$. By Mandal (\cite{Man}, Theorem 2.1) mentioned above, $\phi$ can
be lifted to a surjection $\Phi:A^p \surj I$. Therefore if we further
assume that $p\geq max\{\dim A-p+3,d+1\}$, then by (\cite{BR1},
Theorem 4.2), the associated element $(I,\phi)$ of $E^p(A)$ is
trivial. Since any element of $E^p(A)$ is a pair $(I,\phi)$ for some
height $p$ ideal $I$ of $A$, we get that the $p$-th Euler class group
$E^p(A)=0$.  In particular, $E^{d+1}(R[Y])=0$ for $d\geq 2$. This
result is generalized by Mandal-Parker (\cite{M-Pa}, Theorem 3.1) where
they prove that $E^{d+1}(R[Y,f^{-1}])=0$ for $d\geq 2$ and $f\in
R[Y]$. We generalize Mandal-Parker's result as follows.

\begin{theorem}\label{T2}
Let $A=B[Y,f^{-1}]$ for some $f\in R[Y]$ and $p$ an integer such that
$p\geq max \{\dim A-p+2,d+1\}$. Let $P=Q\op R$ be a projective
$R$-module of rank $p$ and $I$ a proper ideal of $A$ of height $\geq
d+1$. Assume there is a surjection $\phi : P\ot A/I(P\ot A)\surj
I/I^2$. Then $\phi$ can be lifted to a surjection $\Phi:P\ot A \surj
I$.  As a consequence, taking $P$ to be free, we get that any $p$
generators of $I/I^2$ can be lifted to $p$ generators of $I$.
\end{theorem}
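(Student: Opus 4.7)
The plan is to reduce the lifting problem to the setting of Bhatwadekar--Raja Sridharan's lifting theorem (\cite{BR2}, Proposition~3.3) by forcing a monic polynomial in $Y$ into $I$ via an $R$-algebra automorphism of $A$, and then descending to the polynomial subring $B[Y]$.

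First, I would view $A$ as the localisation of the polynomial ring $A_0 := R[X_1,\ldots,X_m,Y_1,\ldots,Y_n,Y]$ at the multiplicative set generated by $Y_1,\ldots,Y_n$ and $f$, and set $J := I\cap A_0$. Since this localisation preserves heights of primes disjoint from that set, $\hh J \geq d+1 > \dim R$. Suslin's monic polynomial theorem (\ref{4t1}) then provides an $R$-algebra automorphism $\gs$ of $A_0$, shifting the variables $X_i, Y_j$ by suitable powers of $Y$ and fixing $R[Y]$, such that $\gs(J)$ contains a polynomial monic in $Y$. Since $f\in R[Y]$ is unaffected by $\gs$, and the shifts in the Laurent variables $Y_j$ can be replaced by multiplicative corrections of the form $Y_j \mapsto Y_j(1 + Y_j^{-1}Y^{M_j})$, exploiting the invertibility of $Y_j$ in $A$, one arranges $\gs$ to extend to an $R$-algebra automorphism of $A$. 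Replacing $(I,\phi)$ by $(\gs(I), \phi\gs^{-1})$, we may assume that $I$ itself contains some $F(Y)\in R[Y]$ monic in $Y$.

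Next, set $I' := I\cap B[Y]$, so that $F\in I'$ and $I = I'A$. Because $F$ is monic, $B[Y]/I'$ is module-finite over $B/(I'\cap B)$ and the natural map $B[Y]/I' \inj A/I = (B[Y]/I')_{\ol f}$ is injective. Clearing powers of $f$ (which is a unit modulo $I$), the surjection $\phi$ descends to a surjection $\phi_1: P \ot B[Y]/I' \surj I'/I'^2$. The numerical hypothesis $p \geq \max\{\dim A - p + 2,\,d+1\}$ together with $\hh I' \geq d+1$ is arranged to give $p \geq \dim(B[Y]/I') + 2$ (possibly after a preliminary reduction exploiting the height bound to control $\dim(B[Y]/I')$). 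Proposition~3.3 of \cite{BR2} then applies, with $P = Q\op R$ projective over $R$ of rank $p$ and $I'\subset B[Y]$ containing the monic $F$, producing a surjective lift $\Phi_1: P \ot B[Y] \surj I'$. Inverting $f$ yields the desired $\Phi: P\ot A \surj I$ lifting $\phi$; taking $P = R^p$ gives the consequence about lifting $p$ generators of $I/I^2$.

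The principal obstacle lies in Step~1, where Suslin's variable shifts must be made compatible with the Laurent structure of $A$. Following Lindel's approach in \cite{L95}, which the authors invoke for Theorem~\ref{T1}, one replaces additive shifts on the $Y_j$ by multiplicative ones, exploiting that $Y_j$ is a unit in $A$ and that $f\in R[Y]$ is fixed by the automorphism. A secondary subtlety is the dimension bookkeeping in Step~2, together with the descent of $\phi$ from $A/I$ to $B[Y]/I'$: the latter requires verifying that multiplying generators by suitable powers of $f$ preserves both the lift and its surjectivity modulo $I'^2$, which relies on $\ol f$ being a unit modulo $I$.
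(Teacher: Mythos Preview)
Your overall plan --- reduce to Bhatwadekar--Raja Sridharan (Proposition~\ref{4p4}) by producing a monic polynomial and descending to a polynomial subring --- matches the paper's, but two of your steps have genuine gaps.

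\textbf{The automorphism does not extend to $A$.} Your proposed ``multiplicative correction'' $Y_j \mapsto Y_j(1 + Y_j^{-1}Y^{M_j})$ is literally the additive shift $Y_j \mapsto Y_j + Y^{M_j}$, and $Y_j + Y^{M_j}$ is \emph{not} a unit in $A$, so this is not an automorphism of the Laurent ring. The paper never tries to make the Suslin shift on the $Y_j$ compatible with $A$ in this way. Instead (Proposition~\ref{4p2}) it uses the genuinely multiplicative substitution $Y_j \mapsto Y_j f^{l_j}$, which \emph{is} an automorphism of $A$ since $f$ is already a unit there, combined with $X_i \mapsto X_i + Y^{t_i} + f^{-s_i}$ on the polynomial variables. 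Moreover, in the proof of Theorem~\ref{T2} (where $f$ is not assumed monic) the paper does not produce a monic in $Y$ inside $I$ at all; it only extracts from Proposition~\ref{4p2} an element of the form $1+fh\in I$ with $h\in B[Y]$.

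\textbf{Descent of surjectivity needs a patching argument.} Even granting some monic $F\in I' = I\cap B[Y]$, multiplying a lift of $\phi$ by a power of $f$ yields a map $\psi: P\otimes (B[Y]/I') \to I'/I'^2$ whose localisation $\psi_f$ is surjective; but you must still show $\psi$ is surjective at primes of $B[Y]/I'$ containing $f$. Your remark that ``$\ol f$ is a unit modulo $I$'' concerns $A/I$, not $B[Y]/I'$, and does not settle this. The paper handles it by first arranging $1+fh\in I'$; then $\psi_{1+fh}$ has zero target, and patching $\psi_f$ with $\psi_{1+fh}$ via the fibre-product Proposition~\ref{4p3} gives surjectivity of $\psi$.

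The paper then performs a \emph{second} descent: writing $B[Y]=C[Y_n^{\pm 1},Y]$ with $C=R[X_1,\ldots,X_m,Y_1^{\pm 1},\ldots,Y_{n-1}^{\pm 1}]$, it applies Mandal's Lemma~\ref{4l2} to $J:=I\cap B[Y]$ to obtain (after a further automorphism) a polynomial in $J$ monic in $Y_n$ and of the form $1+Y_n h'$; descends again to $K:=J\cap C[Y,Y_n]$ using the same patching trick (now with $Y_n$ and $1+Y_nh'$ in place of $f$ and $1+fh$); and only then applies (\ref{4p4}) over the honest polynomial ring $C[Y,Y_n]$, with the monic in $Y_n$. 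Thus the monic polynomial that ultimately feeds into (\ref{4p4}) is monic in a \emph{Laurent} variable $Y_n$, not in $Y$, and the descent is a two-stage process rather than the single step you outline.
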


The following result is a direct consequence of $(\ref{T2})$.

\begin{corollary}
Let $A=B[Y,f^{-1}]$ for some $f\in R[Y]$ and $p$ an integer such that
$p\geq max\{\dim A-p+3,d+1\}$. Then the $p$-th Euler class group
$E^p(A)$ of $A$ is zero.
\end{corollary}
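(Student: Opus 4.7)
The plan is to deduce this corollary directly from Theorem \ref{T2} together with the description of elements of the Euler class group given in the introduction. Concretely, I will show that every generator $(I,\phi)$ of $E^p(A)$ is trivial.

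First I would fix an arbitrary element $\eta \in E^p(A)$. By the discussion following the definition (via the moving lemma and addition principle), $\eta$ can be represented as a pair $(I,\phi)$, where $I$ is an ideal of $A$ of height exactly $p$ and $\phi : (A/I)^p \surj I/I^2$ is a surjection. Since the hypothesis forces $p \geq d+1$, the ideal $I$ has height $p \geq d+1$, so $I$ satisfies the height assumption needed to invoke Theorem \ref{T2}.

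Next I would apply Theorem \ref{T2} with $P = R^p$, writing $P = R^{p-1} \op R$ so that the hypothesis $P = Q \op R$ (with $Q = R^{p-1}$ free of rank $p-1$) is satisfied. The bound $p \geq \max\{\dim A - p + 3,\, d+1\}$ assumed in the corollary is strictly stronger than the bound $p \geq \max\{\dim A - p + 2,\, d+1\}$ required in Theorem \ref{T2}, so that theorem applies. It produces a surjective lift $\Phi : A^p = P \otimes_R A \surj I$ of $\phi$.

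Finally, I would invoke the result of Bhatwadekar and Raja Sridharan (\cite{BR1}, Theorem 4.2), which asserts that the element $(I,\phi) \in E^p(A)$ is trivial precisely when $\phi$ admits a surjective lift $A^p \surj I$; we have just produced such a lift. Therefore $\eta = (I,\phi) = 0$ in $E^p(A)$, and since $\eta$ was arbitrary, $E^p(A) = 0$. There is no real obstacle here: the entire content was packaged into Theorem \ref{T2}, and the corollary is essentially a translation via the lifting criterion of \cite{BR1}; the only points to verify are that the numerical hypotheses of the corollary imply both the hypotheses of Theorem \ref{T2} and the requirement $2p \geq \dim A + 3$ needed for $E^p(A)$ to be defined, and that height$(I) \geq d+1$ so that Theorem \ref{T2} is indeed applicable.
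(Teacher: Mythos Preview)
Your proposal is correct and is precisely the argument the paper has in mind: the paper states the corollary as ``a direct consequence of (\ref{T2})'' without writing out a proof, and your steps---representing an arbitrary element of $E^p(A)$ as $(I,\phi)$ with $\hh I = p \geq d+1$, applying Theorem \ref{T2} with $P=R^p$, and invoking the lifting criterion of \cite{BR1}, Theorem 4.2---are exactly how this direct consequence unfolds.
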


Let $I$ be an ideal of $R[Y]$ containing a monic polynomial and $P$ a
projective $R$-module of rank $p$ with $p\geq \dim (R[Y]/I)+2$. Let
$\phi: P[Y]/IP[Y] \surj I/I^2$ and $\gd:P\surj I(0):=\{f(0)|f\in I\}$
be two surjections such that $\phi(0) =\gd \ot R/I(0)$.
Then Mandal (\cite{Man}, Theorem 2.1)
proved that there exists a surjection $\Phi:P[Y]\surj I$ such that
$\Phi \ot R[Y]/I=\phi$ and $\Phi(0)=\gd$, thus answering a question of
Nori (see \cite{Man}) on Homotopy sections of projective modules, in
case the ideal $I$ contains a monic polynomial.

Above result of Mandal on homotopy section was generalised by
Datt-Mandal (\cite{MD}, Theorem 1.2) to Laurent polynomial case as
follows: Let $I$ be an ideal of $R[Y,Y^{-1}]$ containing a monic
polynomial $f$ in $R[Y]$ with $f(0)=1$. Let $P$ be a projective
$R$-module of rank $p$ with $p\geq \dim (R[Y,Y^{-1}]/I)+2$. Let $\phi:
P[Y,Y^{-1}]/IP[Y,Y^{-1}] \surj I/I^2$ and $\gd:P\surj
I(1):=\{g(Y=1)|g\in I\}$ be two surjections such that $\phi(1) =\gd
\ot R/I(1)$. Then there exists a surjection $\Phi:P[Y,Y^{-1}]\surj I$
such that $\Phi \ot R[Y,Y^{-1}]/I=\phi$ and $\Phi(1)=\gd$.

We prove the following result which is an analogue of Datt-Mandal's
result. 

\begin{theorem}\label{T3}
 Let $A=B[Y,f^{-1}]$, where $f\in R[Y]$ is a monic polynomial with
 $f(1)$ a unit in $R$.  Let $I$ be an ideal of $A$ and $P$ a
 projective $B$-module of rank $\geq max\{d+1, \dim (A/I)+2)\}$.  Let
 $\phi: P[Y,f^{-1}]/IP[Y,f^{-1}]\surj I/I^2$ and $\gd: P\surj I(1)$ be
 two surjections such that $\phi = \gd \ot A/(Y-1)$, where $I(1)$ is
 an ideal of $B$.  Then there exist a surjection $\Psi :
 P[Y,f^{-1}]\surj I$ such that $\Psi \ot A/I=\phi$ and $\Psi(1)=\gd$.
\end{theorem}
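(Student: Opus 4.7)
The plan is to first apply Theorem \ref{T2} to lift $\phi$ to a surjection $\Phi$, and then to modify $\Phi$ at $Y=1$ by a correction valued in $I^2$ so as to match $\gd$ without disturbing the reduction modulo $I$. The strategy closely parallels Datt--Mandal.

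Since the rank of $P$ is $\geq d+1$ and $B$ has Serre dimension $\leq d$ by Bhatwadekar--Lindel--Rao, $P$ carries a unimodular element and decomposes as $P\cong Q\oplus B$. Because $\hh I \geq d+1$, we have $\dim(A/I)\leq \dim A - (d+1)$, so the hypothesis that the rank $p$ of $P$ satisfies $p\geq \max\{d+1,\dim(A/I)+2\}$ yields $2p\geq \dim A+2$, i.e.\ $p\geq \dim A-p+2$. Hence Theorem \ref{T2} applies to the data $(P[Y,f^{-1}],I,\phi)$ and produces a surjection $\Phi:P[Y,f^{-1}]\surj I$ with $\Phi \ot A/I=\phi$.

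Because $f(1)$ is a unit in $R$, the evaluation at $Y=1$ is a well-defined ring map $A\to B$ that sends $I$ onto $I(1)$ and $I^2$ onto $I(1)^2$ (any $\sum a_i(1)b_i(1)$ lifts to $\sum a_ib_i\in I^2$); in particular, $\Phi(1):P\surj I(1)$ is surjective. The compatibility hypothesis together with $\Phi \ot A/I=\phi$ forces $\Phi(1)\equiv \gd\pmod{I(1)^2}$, so $\mu:=\Phi(1)-\gd$ is a $B$-linear map $P\to I(1)^2$. Projectivity of $P$ together with the surjection $I^2\surj I(1)^2$ lifts $\mu$ to a $B$-linear $\tilde\mu:P\to I^2$; extending $A$-linearly by $\wt\mu_A(p\ot a):=a\tilde\mu(p)$ we obtain $\wt\mu_A:P[Y,f^{-1}]\to I^2$. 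Setting $\Psi:=\Phi-\wt\mu_A$, we have $\Psi \ot A/I=\phi$ (since $\wt\mu_A$ has image in $I^2$) and $\Psi(1)=\gd$ by construction.

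The main obstacle is verifying that $\Psi$ is surjective onto $I$. Since $\wt\mu_A(P[Y,f^{-1}])\subseteq I^2$, we have $\Psi(P[Y,f^{-1}])+I^2=I$. At primes $\pri\supseteq I$, $I$ lies in the Jacobson radical of $A_{\pri}$, so Nakayama's lemma applied to $I_{\pri}/\Psi(P[Y,f^{-1}])_{\pri}$ yields the local surjection. For primes $\pri\not\supseteq I$, $I_{\pri}=A_{\pri}$, and one must force surjectivity by a judicious choice of $\tilde\mu$: using the free summand $P\cong Q\oplus B$, and if necessary first adjusting $\Phi$ by an elementary transvection of $P[Y,f^{-1}]$ that preserves both $\phi$ and the surjectivity, one arranges the correction to lie in the free summand so that the passage $\Phi\mapsto \Psi$ becomes composition with an automorphism $\mathrm{id}+s$ of $P[Y,f^{-1}]$ whose $s$ has image in $I\cdot P[Y,f^{-1}]$. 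Such an automorphism preserves the global image, giving $\Psi$ surjective onto $I$. This elementary-transvection step, made possible by the free rank-one summand coming from the lower bound on the rank of $P$, is the crux of the argument.
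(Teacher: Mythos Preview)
Your approach departs from the paper's and leaves genuine gaps.

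\textbf{Invoking Theorem \ref{T2} is not justified.} In \ref{T2} the module $P=Q\oplus R$ is a projective $R$-module, whereas here $P$ is a projective $B$-module; it is not an admissible input to \ref{T2}, and rebasing to $B$ would require $\hh I\geq\dim B+1$ and $\mathrm{rank}\,P\geq\dim B+1$, neither of which is available. Moreover, the rank condition in \ref{T2} is $p\geq\dim A-p+2$, not the condition $p\geq\dim(A/I)+2$ present in \ref{T3}; your derivation of $2p\geq\dim A+2$ is faulty: from $p\geq d+1$ and $p\geq\dim(A/I)+2$ one only gets $2p\geq d+\dim(A/I)+3$, and when $\hh I>d+1$ this can be strictly smaller than $\dim A+2$ (e.g.\ $d=2$, $m=3$, $n=0$, $\hh I=5$, $p=3$, where $\dim A-p+2=5>3$).

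\textbf{Surjectivity of $\Psi$ is not established.} Even granting a surjective $\Phi$, the corrected map $\Psi=\Phi-\wt\mu_A$ satisfies only $\Psi(P[Y,f^{-1}])+I^2=I$, which handles primes containing $I$ by Nakayama; at primes $\pri\not\supseteq I$ you must exhibit a unit in the image, and your transvection sketch does not do this. Writing $\Psi=\Phi\circ(\mathrm{id}+s)$ with $s(P\ot A)\subseteq I\,(P\ot A)$ would require $\mathrm{id}+s$ to be an automorphism, which is not automatic when $I$ is not in the Jacobson radical; the presence of a free summand alone does not manufacture the needed unipotency. This is precisely the obstruction, and the proposal does not resolve it.

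\textbf{What the paper does instead.} The paper never first produces a surjective lift. It takes an arbitrary lift $\Phi_1$ of $\phi$, immediately subtracts an $I^2$-valued correction so that $\Phi_2(1)=\gd$ (no surjectivity claimed), then multiplies by $f^k$ to land $P[Y]$ in $J=I\cap B[Y]$. After an automorphism from Proposition \ref{4p2} placing a monic polynomial in $Y$ and an element $1+fh$ into $I$, the induced $\psi:P[Y]/JP[Y]\to J/J^2$ is surjective by the fiber-product Lemma \ref{4p3}. Then Mandal's homotopy theorem (\cite{Man}, Theorem 2.1) is applied over $B[Y]$: that result yields a surjection $\Psi:P[Y]\surj J$ lifting $\psi$ \emph{and} satisfying $\Psi(1)=f(1)^k\gd$ simultaneously. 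The required map is $f^{-k}\Psi_f$. The point you are missing is that matching at $Y=1$ is built into Mandal's theorem, so no post-hoc correction of a surjection is ever needed.
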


 \section{Preliminaries}
In this section, we note down some results for later use.
For a ring $A$, $\hh I$ will denote the height of an ideal $I$ of $A$.
We begin by stating a result of Lindel (\cite{L95}, Lemma 1.1).
 
\begin{proposition}\label{4p1}
Let $A$ be a ring, $Q$ an $A$-module and $s\in A$ such that $Q_s$ is free
$A_s$-module of rank $r$. Then there exist $p_1,\ldots,p_r \in Q$,
$\phi_1,\ldots,\phi_r\in Q^*$ and $t\geq 1$ such that

$(i)$ $0:_A s'A=0:_A{s'}^{2}A$, where $s'=s^t$.
 
$(ii)$ $s'Q \subset F$ and $s'Q^*\subset G$, where $F=\sum_{i=1}^r
 Ap_i\subset Q$ and $G=\sum _{i=1}^r A\phi_i \subset Q^*$.
  
$(iii)$ $(\phi_i(p_j))_{1\leq i,j\leq r}=diagonal\,(s',\ldots,s')$. We
 say $F$ and $G$ are $s'$-dual submodules of $Q$ and $Q^*$
 respectively.
 \end{proposition}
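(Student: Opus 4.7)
The plan is to reduce the claim to a standard denominator-clearing argument, using Noetherian ACC to stabilise the $s$-torsion annihilators. Pick an $A_s$-basis $\bar e_1,\ldots,\bar e_r$ of $Q_s$ with dual basis $\bar f_1,\ldots,\bar f_r\in (Q_s)^{*}=(Q^{*})_s$; here I use that $Q$ is finitely generated over the Noetherian ring $A$, hence finitely presented, so dualising commutes with localisation at $s$, and $Q^*$ embeds in a finitely generated free module and so is itself finitely generated. Clearing denominators with a common exponent $m$, write $\bar e_j=p_j/s^m$ and $\bar f_j=\phi_j/s^m$ for suitable $p_j\in Q$, $\phi_j\in Q^*$.

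The duality relation $\bar f_i(\bar e_j)=\delta_{ij}$ in $A_s$ yields $s^{N_0}\bigl(\phi_i(p_j)-s^{2m}\delta_{ij}\bigr)=0$ in $A$ for some $N_0$. Since $\{\bar e_j\}$ and $\{\bar f_j\}$ span $Q_s$ and $(Q^*)_s$ and both $Q,Q^*$ are finitely generated, there exist $K_1,K_2$ with $s^{K_1}Q\subset\sum_j Ap_j$ and $s^{K_2}Q^*\subset\sum_j A\phi_j$. By ACC on ideals of $A$, the ascending chain $0:_A s^k$ stabilises; fix $T\geq\max\{N_0,K_1,K_2\}$ large enough that $0:_A s^T=0:_A s^{2T}$. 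Set $p'_j:=s^T p_j$, $\phi'_j:=s^T \phi_j$, $t:=2T+2m$, $s':=s^t$, and let $F=\sum_j A p'_j\subset Q$ and $G=\sum_j A \phi'_j\subset Q^*$.

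A direct computation gives $\phi'_i(p'_j)=s^{2T}(s^{2m}\delta_{ij}+\epsilon_{ij})=s'\delta_{ij}$, since $T\geq N_0$ forces $s^{2T}\epsilon_{ij}=0$; this is (iii). For any $q\in Q$, writing $s^{K_1}q=\sum_j a_j p_j$ yields $s'q=\sum_j (s^{T+2m-K_1}a_j)\,p'_j\in F$, and the dual argument gives $s'Q^*\subset G$, which is (ii). Finally, both $t$ and $2t$ exceed $T$, so $0:_A s'A=0:_A s^T=0:_A {s'}^{2}A$, yielding (i). The only real obstacle is bookkeeping: one must commit to a single exponent $t$ that simultaneously clears denominators for $Q$, $Q^*$, and the matrix $(\phi_i(p_j))$ while also landing in the stable range of the annihilator chain, and the choice $t=2T+2m$ accomplishes all three tasks at once.
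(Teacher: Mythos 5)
Your proof is correct and is essentially the standard denominator-clearing argument that underlies Lindel's Lemma~1.1, which the paper simply cites without reproving. Note that the paper's blanket hypotheses (all rings Noetherian, all modules finitely generated) are exactly what you invoke to make the dual basis live in $(Q^*)_s$, to get the exponents $N_0,K_1,K_2$, and to stabilise the chain $0:_A s^k$; the one small point worth making explicit is that $0:_A s^T=0:_A s^{2T}$ with $T\geq 1$ already forces $0:_A s^T=0:_A s^k$ for all $k\geq T$ (since $0:_A s^T=0:_A s^{T+1}$ implies the chain is stationary thereafter), which is what justifies $0:_A s'A=0:_A {s'}^2A$ for your $t=2T+2m$.
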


The following result on fiber product is well known. For a reference
(see \cite{Man2}, Proposition 2.2.1).

\begin{proposition}\label{4p3}
 Let $A$ be a ring and $f,g\in A$ be such that $fA+gA=A$. Let $M$ and
 $N$ be two $A$-modules. Suppose $\phi: M_f\ra N_f$ is an
 $A_f$-homomorphism and $\psi:M_g\ra N_g$ is an $A_g$-homomorphism
 such that $\phi_g=\psi_f$.  Then
 
 $(i)$ there exist an $A$-homomorphism $\xi: M\ra N$ such that 
 $\xi_f=\phi$ and $\xi_g=\psi$.
 
 $(ii)$ if $\phi$ and $\psi$ are surjective, then $\xi$ is surjective.
\end{proposition}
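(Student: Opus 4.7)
The plan is to reduce both parts to the Mayer--Vietoris short exact sequence
$$0 \ra L \lra L_f \op L_g \lra L_{fg} \ra 0$$
available for every $A$-module $L$, with first map $\ell \mapsto (\ell/1, \ell/1)$ and second map $(x, y) \mapsto x/1 - y/1$. I would first verify its exactness. Injectivity of the first map uses that any $\ell \in L$ killed in both $L_f$ and $L_g$ is annihilated by some $f^a$ and some $g^b$, hence by $f^a A + g^b A = A$. For exactness in the middle, write compatible elements as $x = \ell_1/f^i$, $y = \ell_2/g^j$; the compatibility gives some $k \geq 0$ with $f^k g^{k+j}\ell_1 = f^{k+i} g^k \ell_2$ in $L$, and a Bezout relation $1 = \alpha f^{k+i} + \beta g^{k+j}$ then assembles $\ell := \alpha f^k \ell_1 + \beta g^k \ell_2 \in L$, whose images in $L_f$ and $L_g$ are $x$ and $y$ after a short verification.

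For part (i), given $m \in M$, the hypothesis $\phi_g = \psi_f$ is precisely the statement that $(\phi(m/1), \psi(m/1)) \in N_f \op N_g$ has vanishing image in $N_{fg}$. By the exact sequence above (applied to $L = N$), this pair lifts uniquely to an element $\xi(m) \in N$. Uniqueness of the lift forces $\xi$ to be $A$-linear: both $\xi(m + m') - \xi(m) - \xi(m')$ and $\xi(am) - a\xi(m)$ localize to $0$ in $N_f$ and $N_g$ by the $A$-linearity of $\phi$ and $\psi$, hence vanish by the injectivity established in the preliminary. The identities $\xi_f = \phi$ and $\xi_g = \psi$ are built into the construction.

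For part (ii), assume $\phi$ and $\psi$ are surjective and fix $n \in N$. Surjectivity of $\phi$ yields $m_1 \in M$ and $a \geq 0$ with $\phi(m_1/1) = f^a n/1$ in $N_f$; surjectivity of $\psi$ yields $m_2 \in M$ and $b \geq 0$ with $\psi(m_2/1) = g^b n/1$ in $N_g$. The cocycle condition $\phi_g = \psi_f$ forces $\psi(m_1/1) = f^a n/1$ to hold in $N_{fg}$, so after multiplying $m_1$ by a sufficiently high power of $f$ and updating $a$ accordingly, the equality already holds in $N_g$; symmetrically, multiplying $m_2$ by a power of $g$ achieves $\phi(m_2/1) = g^b n/1$ in $N_f$. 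Choose a Bezout identity $1 = \alpha f^a + \beta g^b$ and set $m := \alpha m_1 + \beta m_2$. A direct computation then gives $\phi(m/1) = n/1$ in $N_f$ and $\psi(m/1) = n/1$ in $N_g$, so $\xi(m) - n \in N$ vanishes in both $N_f$ and $N_g$, whence $\xi(m) = n$ by injectivity of the first map of the Mayer--Vietoris sequence.

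The only subtle step is the adjustment in (ii): the elements $m_1, m_2$ coming out of the two surjectivities each work only for one of the two maps, and the cocycle condition $\phi_g = \psi_f$ must be used to multiply them by appropriate powers of $f$ and $g$ respectively before a single Bezout combination can serve both $\phi$ and $\psi$ simultaneously. Everything else is routine bookkeeping with the Mayer--Vietoris sequence and $f$-, $g$-torsion.
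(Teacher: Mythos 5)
The paper does not supply its own proof of this proposition; it records it as a well-known fact and refers the reader to \cite{Man2}, Proposition 2.2.1, so there is no in-house argument to compare against. Your Mayer--Vietoris proof is correct and is essentially the standard one: the left-exact sequence $0 \to N \to N_f \oplus N_g \to N_{fg}$ is precisely the statement that $N$ is the fibre product $N_f \times_{N_{fg}} N_g$ (which is what the cited reference uses), part (i) is then immediate from uniqueness of lifts, and in part (ii) the adjustment step --- multiplying $m_1$ by a power of $f$ and $m_2$ by a power of $g$, justified by the compatibility $\phi_g = \psi_f$, before taking the Bezout combination $1 = \alpha f^a + \beta g^b$ --- is exactly the point where care is needed, and you handle it correctly; the only small thing worth recording explicitly is the fact, used tacitly several times, that $fA + gA = A$ implies $f^{a}A + g^{b}A = A$ for all $a,b \geq 0$.
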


The following is implicit in Suslin's result (\cite{Su}, Lemma 6.2)
and is known as Suslin's monic polynomial theorem.

\begin{theorem}\label{4t1}
Let $I$ be an ideal of $R[X_1,\ldots,X_m]$ of height $> d$. Then there
exist a positive integer $N$ such that for any integers $s_i>N$ if
$\phi$ is the $R[X_m]$-automorphism of $R[X_1,\ldots,X_m]$ defined by
$\phi(X_i)= X_i+X_m^{s_i}$ for $1\leq i\leq m-1$, then $\phi(I)$
contains a monic polynomial in $X_m$ with coefficients from
$R[X_1,\ldots,X_{m-1}]$.
 
\end{theorem}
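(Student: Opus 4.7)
The plan is to adapt the Datt--Mandal argument (\cite{MD}, Theorem 1.2), which handles the analogous statement for $R[Y, Y^{-1}]$, to the more general localization $A = B[Y, f^{-1}]$. The principal external input is the Bhatwadekar--Raja Sridharan version of Mandal's Nori-homotopy theorem (\cite{BR2}, Proposition 3.3), which handles the polynomial case $B[Y]$ when the ideal contains a monic polynomial and the evaluation point is $Y = 0$. The hypotheses of the present theorem can be brought into alignment with theirs by two preparatory steps: shifting the evaluation point to $0$, and descending the data from $A$ down to $B[Y]$.

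First, apply the $B$-automorphism $\tau : Y \mapsto Y + 1$ of $B[Y]$, extended to $A$; this sends the evaluation point $Y = 1$ to $Y = 0$ and replaces $f$ by $\tilde f(Y) = f(Y+1)$, which is still monic in $Y$ with $\tilde f(0) = f(1) \in R^{\times}$. So we may henceforth assume the evaluation point is $Y = 0$ and $f(0)$ is a unit in $R$. Next, produce a descent of $(I, \phi, \gd)$ to a triple $(J, \phi_J, \gd_J)$ over $B[Y]$, such that (i) $J$ contains a monic polynomial in $Y$, (ii) $J \cdot A = I$, (iii) $\phi_J$ localizes to $\phi$ after inverting $f$, and (iv) $\gd_J : P \surj J(0)$ is compatible with $\phi_J$ modulo $J(0)^2$. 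Because $f(0)$ is a unit in $R$, the quotients $B[Y]/J$ and $A/I$ differ only by inversion of the monic $f$, so $\dim(B[Y]/J) = \dim(A/I)$ and moreover $J(0) = I(0)$ as ideals of $B$; this is where the hypothesis $f(1) \in R^{\times}$ is essentially used. Applying Bhatwadekar--Raja Sridharan's theorem to $(J, \phi_J, \gd_J)$ yields a surjection $\Psi_J : P \otimes_B B[Y] \surj J$ with $\Psi_J \otimes_{B[Y]} B[Y]/J = \phi_J$ and $\Psi_J(0) = \gd_J$. Localizing at $f$ and undoing $\tau$ produces the desired $\Psi$ on $A$.

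The main obstacle is step (i) of the descent. The naive choice $J = I \cap B[Y]$ need not contain any monic polynomial, while putting $f$ itself into $J$ is disallowed because it forces $J \cdot A = A$ (since $f \in A^{\times}$). One must instead insert a carefully chosen monic polynomial into $J$, obtained by multiplying a suitable generator of $I$ by an appropriate power of $f$ and simultaneously modifying the data $\phi$ and $\gd$ along that direction so that the modifications cancel after localization at $f$. The strategy is analogous to the Mandal-trick and to Suslin's monic polynomial theorem (Theorem \ref{4t1}), but must be implemented so as to preserve the compatibility of $\gd$ at $Y = 0$; the unit hypothesis on $f(0) = f(1)$ is precisely what makes the modification reversible after localization and ensures that the $Y = 0$ fiber is unaffected by the descent.
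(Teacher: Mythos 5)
Your proposal does not address the stated theorem at all. The statement you were given is Theorem~\ref{4t1}, Suslin's monic polynomial theorem: for an ideal $I$ of $R[X_1,\ldots,X_m]$ of height $>d$, there is an integer $N$ so that for all $s_i>N$ the $R[X_m]$-automorphism $\phi(X_i)=X_i+X_m^{s_i}$ carries $I$ to an ideal containing a monic polynomial in $X_m$ over $R[X_1,\ldots,X_{m-1}]$. This is a purely ring-theoretic normalization statement about ideals in a polynomial ring; it mentions no projective module $P$, no surjections $\phi\colon P[Y,f^{-1}]/IP[Y,f^{-1}]\surj I/I^2$ or $\gd\colon P\surj I(1)$, no evaluation at $Y=0$ or $Y=1$, and no ring of the form $B[Y,f^{-1}]$. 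What you have outlined --- shifting $Y\mapsto Y+1$, descending the data from $A=B[Y,f^{-1}]$ to $B[Y]$ via $J=I\cap B[Y]$, inserting a monic polynomial into $J$ while keeping $JA=I$, and invoking the Bhatwadekar--Raja~Sridharan homotopy theorem --- is a plausible strategy for Theorem~\ref{T3} (the Nori-homotopy analogue for $A=B[Y,f^{-1}]$), and indeed it broadly parallels how the paper proves that result. But none of it produces the integer $N$, the automorphism $\phi$, or the monic polynomial required by Theorem~\ref{4t1}, so the proposal leaves the actual statement untouched.

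Note also that the paper does not prove Theorem~\ref{4t1}; it records it as ``implicit in Suslin's result (\cite{Su}, Lemma 6.2).'' A proof of the actual statement must exploit the height hypothesis $\hh I>d$ directly: one shows that $I$ contains an element $g$ whose behavior under the weighted substitution $X_i\mapsto X_i+X_m^{s_i}$ is controlled, and that for $s_1\gg s_2\gg\cdots\gg s_{m-1}\gg 0$ (or all $s_i$ exceeding some fixed $N$ with appropriate ordering) the image $\phi(g)$ has a unit as its top $X_m$-coefficient and hence, after scaling, becomes a monic polynomial in $X_m$. The argument is combinatorial (comparing exponents of monomials) rather than homological, and no lifting of surjections enters. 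You should replace your write-up with an argument of that kind, or simply note, as the paper does, that the statement follows from Suslin's lemma.
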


The following result is implicit in Mandal's result (\cite{Man1}, Lemma 2.3).

\begin{lemma}\label{4l2}
 Let $I$ be an ideal of $B$ of height $>d$ and $n>0$. Then there
 exist a $R[Y_n^{\pm 1}]$-automorphism $\Theta$ of $B$ such that
 $\Theta(I)$ contains a monic polynomial in $Y_n$ of the form $1+Y_nh$
 for some $h\in R[X_1,\ldots,X_m,Y_1^{\pm 1},\ldots,Y_{n-1}^{\pm
     1},Y_n]$.
\end{lemma}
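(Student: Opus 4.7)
Proof Plan.

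The plan is to mimic the proof of Suslin's monic polynomial theorem (Theorem \ref{4t1}) but with an automorphism tailored to the Laurent structure so that it genuinely fixes $R[Y_n^{\pm 1}]$. The essential modification is that for $j < n$ we cannot translate $Y_j$ additively, since $Y_j + Y_n^{s}$ is not a unit in $B$; instead one must use a multiplicative translation $Y_j \mapsto Y_j Y_n^{t_j}$. Throughout, set $D := R[X_1,\ldots,X_m,Y_1^{\pm 1},\ldots,Y_{n-1}^{\pm 1}]$, so that $B = D[Y_n^{\pm 1}]$.

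The first step is to select a suitable element $g \in I$. Because $\hh I > d = \dim R$, the ideal $I$ is not contained in any prime of $B$ of height $\leq d$, and Suslin's argument underlying Theorem \ref{4t1} shows (applied to the Laurent ring $B$ in place of a polynomial ring) that after replacing a set of generators of $I$ by a generic $R$-linear combination, one can produce $g \in I$ that, when expanded as a Laurent polynomial in the variables $X_i, Y_j$, contains a distinguished monomial whose coefficient is a unit of $R$. This is the only place the height hypothesis is used.

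Next, define the $R[Y_n^{\pm 1}]$-automorphism $\Theta$ of $B$ by
\[
\Theta(X_i) = X_i + Y_n^{s_i}, \qquad \Theta(Y_j) = Y_j\,Y_n^{t_j}, \qquad \Theta(Y_n) = Y_n,
\]
for $1 \leq i \leq m$ and $1 \leq j \leq n-1$, with positive integers $s_i, t_j$ chosen sufficiently large and sufficiently well-separated. The map $\Theta$ is an automorphism since it has an explicit inverse ($+Y_n^{s_i}$ is replaced by $-Y_n^{s_i}$ and $Y_n^{t_j}$ by $Y_n^{-t_j}$), and it clearly fixes $R[Y_n^{\pm 1}]$. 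The growth condition on the exponents ensures that the finitely many monomials of $g$ are sent to Laurent polynomials in $Y_n$ with pairwise distinct extreme $Y_n$-exponents, so both the top and bottom $Y_n$-degree of $\Theta(g)$ are achieved by the distinguished monomial; the corresponding extreme coefficients (in $D$) are then units in $D$, inherited from the unit coefficient of that monomial up to a unit monomial in $Y_1,\ldots,Y_{n-1}$.

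Finally, multiply $\Theta(g)$ by a suitable unit of $B$ (a unit of $D$ times a power of $Y_n$) to normalize: this produces an element of $\Theta(I)$ lying in the polynomial subring $D[Y_n]$ with constant term $1$. A last adjustment, either by pre-composing $\Theta$ with an $R[Y_n^{\pm 1}]$-automorphism of the form $Y_j \mapsto r_j Y_j$ (with $r_j \in R^*$) or by a further careful choice of the exponents $t_j$ so that the extreme coefficients coincide, forces the leading $Y_n$-coefficient to be $1$ as well, yielding the desired polynomial $1 + Y_n h \in \Theta(I)$ that is monic in $Y_n$. The main obstacle is the simultaneous control of the top (monicness) and the bottom (constant term equal to $1$) coefficients in one automorphism; this is the subtle point and is the reason the Laurent case requires more than a direct invocation of Theorem \ref{4t1}.
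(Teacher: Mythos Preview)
Your approach has a genuine gap in the core computation. Under your automorphism $\Theta$ (with $X_i\mapsto X_i+Y_n^{s_i}$ and $Y_j\mapsto Y_jY_n^{t_j}$), the image of a monomial $X^{\alpha}Y^{\beta}$ has highest $Y_n$-degree $\sum_i s_i\alpha_i+\sum_{j<n}t_j\beta_j+\beta_n$ but lowest $Y_n$-degree $\sum_{j<n}t_j\beta_j+\beta_n$, which depends only on $\beta$ and \emph{not} on $\alpha$. Thus all monomials of $g$ sharing the same $Y$-exponent $\beta$ contribute to the same lowest $Y_n$-degree, and the bottom $Y_n$-coefficient of $\Theta(g)$ is $\bigl(\sum_{\alpha}c_{\alpha,\beta^{\min}}X^{\alpha}\bigr)\prod_{j<n}Y_j^{\beta^{\min}_j}$, a genuine polynomial in the $X_i$'s rather than a unit of $D$. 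So your claim that ``both the top and bottom $Y_n$-degree of $\Theta(g)$ are achieved by the distinguished monomial'' is false as stated; adding only positive powers of $Y_n$ to the $X_i$'s cannot separate the lower extremes. A second, independent gap is Step~1: Suslin's argument behind Theorem~\ref{4t1} does \emph{not} produce a single $g\in I$ with a unit coefficient in $R$; it proceeds by induction on leading-coefficient ideals. Over a non-semilocal $R$ there is no reason a generic $R$-linear combination of generators should have a unit coefficient (content equal to $R$ is not the same thing), so your distinguished monomial may simply not exist.

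The paper does not give its own proof of this lemma; it cites Mandal \cite{Man1}, Lemma~2.3. Mandal's argument---and the closely parallel proof of Proposition~\ref{4p2} in the paper---does not attempt to find one element doing both jobs. Instead one works with the two leading-coefficient ideals $L_{Y_n}(I)$ and $L_{Y_n^{-1}}(I)$ in $D$, both of height $>d$, and applies induction on $m+n$ to their intersection to obtain an automorphism of $D$ after which this intersection contains a suitable element. One then finds $F\in I$ with that element as top $Y_n$-coefficient and $G\in I$ with it as bottom $Y_n$-coefficient, and a final substitution in $Y_n$ (or a combination of $F$ and $G$) yields the desired polynomial. The two-ideal strategy is exactly what circumvents the obstruction you ran into.
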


The following result is due to Bhatwadekar-Lindel-Rao (\cite{BLR},
Theorem 4.1).

\begin{theorem}\label{4t3}
Let $P$ be a projective $B$-module of rank $>d$. Then $P$ has a
unimodular element.
\end{theorem}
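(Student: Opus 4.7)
The strategy adapts Datt-Mandal's approach \cite{MD} for Laurent polynomial rings to the overring $A=B[Y,f^{-1}]$. Since $f\in R[Y]$ is monic with $f(1)\in R^\times$, substituting $T=Y-1$ and rescaling by $f(1)$ identifies $A$ with $B[T,g^{-1}]$ where $g=1+Th$ for some $h\in R[T]$; the evaluation $Y\mapsto 1$ then becomes $T\mapsto 0$, and $I(1)$ is the image of $I$ in $B=A/(T)$. In these coordinates the compatibility $\phi=\gd\otimes A/(Y-1)$ asserts that the reduction of $\phi$ modulo $T$ equals the reduction of $\gd$ modulo $I(1)^2$.

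My plan has three steps. First, apply an $R[T]$-automorphism $\Theta$ of $A$ (acting on the $X_i$ and $Y_j$ variables, in the spirit of Lemma~\ref{4l2} and Theorem~\ref{4t1}) so that $\tilde I:=\Theta(I)\cap B[T]$ contains a polynomial of the form $1+TH$ which is monic in $T$; updating $\gd$ by $\Theta|_B$ if needed preserves the compatibility with $\phi$. Second, since $\text{rank}(P)\geq d+1$, Theorem~\ref{4t3} gives $P=Q\oplus B$, and the remaining condition $\text{rank}(P)\geq\dim(A/I)+2\geq\dim(B[T]/\tilde I)+2$ meets the numerical hypothesis of Mandal's homotopy theorem \cite[Theorem 2.1]{Man}, in the split form of \cite[Proposition 3.3]{BR2}. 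Applied to the triple $(\tilde I,\tilde\phi,\gd)$ over the polynomial ring $B[T]$ --- where $\tilde\phi$ is a descent of $\phi$ to $\tilde I/\tilde I^2$ --- it produces a surjection $\tilde\Psi:P[T]\surj\tilde I$ with $\tilde\Psi\mod\tilde I=\tilde\phi$ and $\tilde\Psi(0)=\gd$. Third, localizing at $g$ and inverting $\Theta$ yields the required $\Psi:P\otimes A\surj I$ satisfying $\Psi\otimes A/I=\phi$ and $\Psi(1)=\gd$.

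The main obstacle is descending the surjection $\phi$ to one over $B[T]$: since $\phi$ is defined only after inverting $g$, a preimage in $\Hom(P[T]/\tilde IP[T],\tilde I/\tilde I^2)$ exists only after clearing some power of $g$ and need not itself be surjective. Resolving this uses the monic polynomial $1+TH\in\tilde I$, which reduces to the unit $1$ on the $T=0$ fiber: one modifies the candidate preimage by elements of $\tilde I$ vanishing at $T=0$, thereby absorbing the $g$-denominators and recovering surjectivity onto $\tilde I/\tilde I^2$, without disturbing the reduction at $T=0$ and hence the compatibility with $\gd$.
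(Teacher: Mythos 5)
Your proposal does not prove Theorem~\ref{4t3}. That theorem is the Bhatwadekar--Lindel--Rao result (\cite{BLR}, Theorem~4.1): every projective module of rank $>d$ over the Laurent polynomial ring $B=R[X_1,\ldots,X_m,Y_1^{\pm1},\ldots,Y_n^{\pm1}]$ has a unimodular element. The paper does not reprove it; it quotes it as a known result and uses it as an input (for instance in the proof of Theorem~\ref{T1}, applied to $A/(s(Y-1))$). What you have written is instead an argument aimed at Theorem~\ref{T3}, the homotopy lifting statement for surjections onto ideals of $A=B[Y,f^{-1}]$: the data $I$, $\phi$, $\gd$, $I(1)$, and the desired $\Psi$ with $\Psi\ot A/I=\phi$ and $\Psi(1)=\gd$ all belong to the hypotheses and conclusion of Theorem~\ref{T3}, none of them to Theorem~\ref{4t3}.

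Moreover, read as a proof of Theorem~\ref{4t3}, your argument is circular: in your second step you write that ``Theorem~\ref{4t3} gives $P=Q\oplus B$,'' invoking the very statement you set out to prove. An independent proof of Theorem~\ref{4t3} requires the Bhatwadekar--Lindel--Rao machinery (Lindel's $s$-dual submodules, Quillen--Suslin local-global patching over a non-zerodivisor of $R$), which this paper cites rather than reproduces. Even read as a proof of Theorem~\ref{T3}, two of your choices deserve scrutiny. First, Proposition~\ref{4p4} lifts a surjection $\phi$ but carries no boundary condition, so it cannot deliver $\Psi(1)=\gd$, which is the entire point of the homotopy statement; you must use Mandal's Theorem~2.1 of \cite{Man} with its evaluation compatibility. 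Second, since Mandal's theorem applies to arbitrary projective $P$ over the base, the splitting $P=Q\oplus B$ is not needed, so the appeal to Theorem~\ref{4t3} is superfluous as well as circular. The paper's actual proof of Theorem~\ref{T3} first corrects the lift so that $\Phi_2(1)=\gd$ exactly, passes to $J:=I\cap B[Y]$, establishes surjectivity of the induced $\psi$ via the patching Proposition~\ref{4p3} using $(J/J^2)_{1+fh}=0$, and then invokes Mandal's Theorem~2.1 directly.
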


The following result is due to Bhatwadekar-Raja Sridharan (\cite{BR3},
Proposition 3.3).

\begin{proposition}\label{4p4}
 Let $I$ be an ideal of $R[X]$ containing a monic polynomial and
 $P=Q\op A$ a projective $R$-module of rank $r$, where $r\geq \dim
 (R[X]/I)+2$.  Let $\phi: P[X]\surj I/I^2$ be a surjection. Then
 $\phi$ can be lifted to a surjection $\Phi:P[X]\surj I$.
\end{proposition}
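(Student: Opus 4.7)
The plan is to lift $\phi$ using projectivity, then modify the lift along the free summand of $P$ so as to obtain a surjection; the rank bound $r\geq \dim(R[X]/I)+2$ provides precisely the room needed for this modification.

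\emph{Setup and initial lift.} Because $I$ contains a monic polynomial, $R[X]/I$ is module-finite over $R$, so $\dim(R[X]/I)\leq d$. Write $P=Q\oplus R$, so $P[X]=Q[X]\oplus R[X]$, and decompose $\phi(q,h)=\phi_1(q)+h\bar a$ with $\bar a:=\phi(0,1)\in I/I^2$ and $\phi_1:=\phi|_{Q[X]/IQ[X]}$. Lift $\bar a$ to some $a\in I$; since $Q[X]$ is projective and $I\surj I/I^2$ is surjective, we can also lift $\phi_1$ to $\Psi:Q[X]\to I$. Then $\Phi_0(q,h):=\Psi(q)+ha$ is a map $P[X]\to I$ lifting $\phi$, but need not be surjective.

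\emph{Upgrading to a surjection.} Let $J:=\Phi_0(P[X])$. From $J+I^2=I$ and Nakayama's lemma, there exists $e\in I$ with $(1-e)I\subseteq J$, equivalently $I=J+eI$. For any $c\in I$, the map $\Phi_c(q,h):=\Psi(q)+h(a+ec)$ is again a lift of $\phi$ (since $ec\in eI\subseteq I^2$), with image $\Psi(Q[X])+(a+ec)R[X]$. The free summand $R$ of $P$ plays the crucial role here: it supplies a one-parameter family of admissible modifications $c\mapsto a+ec$ while the $\Psi$-component is held fixed. The task is to choose $c\in I$ so that $\Psi(Q[X])+(a+ec)R[X]=I$.

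\emph{The moving step and main obstacle.} Equivalently, we must pick $c\in I$ so that $a+ec$ generates $I/\Psi(Q[X])$ locally at each minimal prime of $I/J$; all such primes contain $I$ (because $(1-e)$ annihilates $I/J$), so they lie in $\Spec(R[X]/I)$, a scheme of dimension $\leq d$. The rank hypothesis $r\geq \dim(R[X]/I)+2$ is the sharp Plumstead/Eisenbud--Evans bound allowing such a $c$ to be produced by a basic-element (``moving lemma'') argument, exactly as in the free-module case of Mandal's theorem \cite{Man}, Theorem 2.1. Executing this moving argument using only the single free summand $R$ of $P$, rather than the full-rank freedom available when $P$ is free, is the technical heart of the proof and its main obstacle.
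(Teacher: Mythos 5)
The paper does not give a proof of this proposition: it is quoted from Bhatwadekar--Raja Sridharan (\cite{BR3}, Proposition 3.3), so there is nothing internal to compare against. Evaluated on its own terms, your argument contains a definite error at the one point where you make a precise claim, and the final step is not actually carried out.

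The claim that fails is: \emph{all minimal primes of $I/J$ contain $I$, because $1-e$ annihilates $I/J$}. This is backwards. Since $1-e\in \mathrm{Ann}(I/J)$, every prime in $\mathrm{Supp}(I/J)$ contains $1-e$; but $e\in I$, so a prime containing both $1-e$ and $I$ would contain $e+(1-e)=1$. Thus (when $I/J\neq 0$) the minimal primes of $I/J$ \emph{never} contain $I$; they lie in $V(J)\setminus V(I)$, not in $\Spec(R[X]/I)$. The dimension bound $\dim(R[X]/I)\leq d$ you obtained from the monic polynomial therefore says nothing about the locus you need to control, and the rank hypothesis $r\geq \dim(R[X]/I)+2$ gives you no leverage there.

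There is also a structural problem with the modification $a\mapsto a+ec$: the new ideal $J'=\Psi(Q[X])+(a+ec)R[X]$ need not contain the old $J=\Psi(Q[X])+aR[X]$, since $a=(a+ec)-ec$ and $ec$ need not lie in $J'$. So even if you choose $c\in I$ by prime avoidance to clear the (finitely many) minimal primes of $I/J$, you may create new primes in $\mathrm{Supp}(I/J')$ and the process does not obviously terminate. Finally, the ``main obstacle'' you yourself identify --- running the Eisenbud--Evans/Plumstead moving argument with $\Psi$ frozen and only the single free coordinate of $P$ available --- is asserted rather than executed. In the actual proofs (Mandal's free case and its $Q\oplus R$ refinement) the monic polynomial is used far more than as a bound on $\dim(R[X]/I)$: one passes to the module-finite ring $R[X]/(g)$, where the dimension genuinely drops, performs the basic-element argument there, and then lifts back. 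Using the monic only as a dimension bound leaves the bad primes in a space of dimension up to $\dim R[X]=d+1$, where the hypothesis $r\geq\dim(R[X]/I)+2$ (which can be as small as $2$) is far too weak.
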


The following result is due to Dhorajia-Keshari (\cite{DK1}, Theorem
3.12). We will only state the part needed here.

\begin{theorem}\label{DK2}
Let $A=R[X_1,\ldots,X_m,Y_1,\ldots,Y_n,(f_1\ldots f_n)^{-1}]$ with
$f_i\in R[Y_i]$ and $P$ a projective $A$-module of rank $r\geq
d+1$. Then $P$ is cancellative, i.e. $P\op A^t \iso Q\op A^t$
for some integer $t>0$ implies $P\iso Q$.
\end{theorem}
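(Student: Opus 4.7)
The plan is to prove cancellation via the standard reduction to transitivity of the automorphism action on unimodular elements, followed by induction on $n$, the number of polynomial variables $Y_i$ whose polynomial $f_i$ has been inverted. By iterating the cancellation of a single $A$-summand, it suffices to handle $t=1$: show $P\op A\iso Q\op A$ implies $P\iso Q$. This is classically equivalent to the orbit condition: for every unimodular $v\in\Um(P\op A)$ there exists $\sigma\in\Aut(P\op A)$ with $\sigma(v)=(0,1)$, where we recall that $P$ has rank at least $d+1$. Establishing this transitivity will imply the cancellation statement for all $t$ by induction, since at each stage the module being cancelled has rank $\geq d+1$.

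I would then proceed by induction on $n$. For the base case $n=0$, we have $A=R[X_1,\ldots,X_m]$, and cancellation of projectives of rank $\geq d+1$ over such a polynomial ring is the classical theorem (the cancellation analogue of the Bhatwadekar--Roy unimodular existence result, established by Rao and collaborators). For the inductive step, write $A=A'[Y_n,f_n^{-1}]$ with $A'=R[X_1,\ldots,X_m,Y_1,\ldots,Y_{n-1},(f_1\cdots f_{n-1})^{-1}]$. Given $v\in\Um(P\op A)$, I would apply Proposition \ref{4p1} to locate $s\in A'$ so that $P_s$ is free and $s'$-dual submodules of $P$ and $P^*$ are available; then use a Suslin-type change of variable, modelled on Theorem \ref{4t1} or Lemma \ref{4l2}, to arrange that the coordinate ideal of $v$ in $A$ contains an element that, as a polynomial in $Y_n$, is monic modulo $s$. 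A Horrocks-type simplification on $A_s$ combined with the inductive hypothesis applied to $A'$ produces local automorphisms, which are then patched via the fiber product Proposition \ref{4p3}.

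The main obstacle is the non-monic, general polynomial $f_n\in R[Y_n]$: since no specialization of $Y_n$ is guaranteed to make $f_n$ invertible inside $A'$, a naive Horrocks--Quillen patching does not apply directly. The technical crux will be to force a monic-in-$Y_n$ element into the coordinate ideal of $v$ through a Suslin-type change of variable; once this structure is in place, Lindel's dualization (Proposition \ref{4p1}) isolates the ``free part'' of $P$, and the fiber-product Proposition \ref{4p3} provides the patching framework needed to invoke the inductive hypothesis on $A'$ and so complete the bootstrap from $A'$ to $A$.
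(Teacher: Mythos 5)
The paper does not prove this theorem: it is stated in Section~2 as a preliminary result quoted from Dhorajia--Keshari (\cite{DK1}, Theorem~3.12), so there is no proof inside the paper against which to compare your attempt. I will therefore assess your proposal on its own terms.

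The opening reductions are sound: $t=1$ suffices by a standard iteration, and cancellation is indeed equivalent to transitivity of $\Aut(P\op A)$ on $\Um(P\op A)$. Inducting on $n$ with base case $A=R[X_1,\ldots,X_m]$ is also a reasonable skeleton, though that base case (cancellation for rank $\geq d+1$ projectives over $R[X_1,\ldots,X_m]$) is not ``classical'' in the sense of Bass--Serre; it is itself a Plumstead/Bhatwadekar--Roy/Lindel-type theorem and deserves a precise citation rather than an appeal to folklore.

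The real gap is in the inductive step. First, Proposition~\ref{4p3} patches \emph{surjective module homomorphisms} over a Zariski cover $\{A_f,A_g\}$; it is not a tool for patching \emph{automorphisms} of $P\op A$ carrying $v$ to $(0,1)$. For that one needs a Quillen-type local--global principle for the isotropy of unimodular elements, or Lindel's $s^2$-analytic semilinear-map machinery (which is exactly what this paper deploys in the proof of Theorem~\ref{T1}), and your proposal names neither. Second, ``Horrocks-type simplification on $A_s$'' is underspecified: Horrocks' theorem is about freeness over $R[Y]_f$ with $R$ local and $f$ monic, and converting it into an automorphism moving $v$ in a projective (not free) $P\op A$ requires the Lindel dual-submodule apparatus in a quite specific way; you invoke Proposition~\ref{4p1} but do not explain how the $s'$-duality yields the desired transvections. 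Third, even after forcing a monic-in-$Y_n$ element into $O_{P\op A}(v)$, reducing the problem to cancellation over $A'$ is not automatic: the standard Horrocks--Quillen descent shows $P$ is \emph{extended} from $A'$, which is a strictly stronger statement than what your inductive hypothesis gives, and you would need to justify why the hypothesis suffices. In short, the outline is of the right general shape, but the crucial patching step, which carries the entire weight of the argument, is replaced by a gesture toward the wrong lemma, so as written the proposal does not close.
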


\begin{define}
For an integer $n>0$, a sequence of elements
 $a_1,\ldots,a_n$ in $R$ is said to be a {\it regular sequence} of
 length $n$ if $a_i$ is a non-zerodivisor in $R/(a_1,\ldots,a_{i-1})$
 for $i=1,\ldots,n$.

Let $I$ be an ideal of $R$.  We say $I$ is {\it set theoretically}
generated by $n$ elements $f_1,\ldots,f_n \in R $ if
$\sqrt{I}=\sqrt{(f_1,\ldots,f_n)}$.

Assume height of $I$ is $n$. Then $I$ is said to be a
 {\it complete intersection} ideal if $I$ is generated by a regular
 sequence of length $n$. Further,
$I$ is said to be a {\it locally complete intersection}
ideal if $I_{\pri}$ is a complete intersection ideal of
height $n$ for all prime ideals $\pri$ of $R$ containing $I$.
$\hfill \gj$
\end{define}
\medskip
 
 The following result is due to Mandal-Roy (\cite{M-R}, Theorem
 2.1). See also (\cite{Man1} Theorem 6.2.2).

 \begin{theorem}\label{4t4}
  Let $J\subset I$ be two ideals of $R[X]$ such that
  $I$ contains a monic polynomial. Assume $I=(f_1,\ldots,f_n)+ I^2$
  and $J=(f_1,\ldots,f_{n-1}) + I^{(n-1)!}$.  Then $J$ is generated by
  $n$ elements. As a consequence, since $\sqrt I=\sqrt J$, $I$ is
  set-theoretically generated by $n$ elements.
 \end{theorem}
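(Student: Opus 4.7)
The plan is to exhibit a single element $h \in I^{(n-1)!}$ such that $J = (f_1, \ldots, f_{n-1}, h)$. The inclusion $(f_1, \ldots, f_{n-1}, h) \subseteq J$ is automatic, since $h \in I^{(n-1)!} \subseteq J$, so the substance is the reverse containment, which, after passing to the quotient ring $\bar A := R[X]/(f_1, \ldots, f_{n-1})$, amounts to showing that the ideal $\bar I^{(n-1)!}$ of $\bar A$ is principal; any lift of a generator then serves as $h$.

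First I would apply Nakayama's lemma: the hypothesis $I = (f_1, \ldots, f_n) + I^2$ says that the finitely generated $R[X]$-module $M = I/(f_1, \ldots, f_n)$ satisfies $IM = M$, producing $e \in I$ with $(1-e)I \subseteq (f_1, \ldots, f_n)$. Consequently $e - e^2 \in (f_1, \ldots, f_n)$, so $e$ is idempotent modulo $(f_1, \ldots, f_n)$ and $e^k \equiv e \pmod{(f_1, \ldots, f_n)}$ for every $k \geq 1$. Passing to $\bar A$, the same argument furnishes $\bar e \in \bar I$ with $(1-\bar e)\bar I \subseteq (\bar f_n)$, and iterating the identity $\bar I = (\bar f_n) + \bar I^2$ into itself gives $\bar I = (\bar f_n) + \bar I^k$ for every $k \geq 2$. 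The monic-polynomial hypothesis enters here to ensure $R[X]/I$ is a finite $R$-module, which bounds $\dim(R[X]/I)$ by $d$ and, more practically, keeps the quotients that appear Noetherian so that Nakayama remains available at each level of the argument.

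The main obstacle is converting these decompositions into an actual single generator of $\bar I^{(n-1)!}$ with the factorial exponent being exactly right. I would proceed by induction on $n$. Each inductive step uses the decomposition $x = \bar e x + (1-\bar e)x$ inside products of factors of $\bar I$, with $(1-\bar e)x \in (\bar f_n)$, to push every term into a shape $\bar f_n \bar g + \bar e^N$, and the idempotency $\bar e^N \equiv \bar e \pmod{(\bar f_n)}$ collapses the varied exponents into a single generator. The factorial $(n-1)!$ arises because the passage from the $k$-generator situation to the $(k-1)$-generator one requires multiplying the current exponent by $k-1$, so after $n-1$ reduction layers the accumulated exponent is exactly $1 \cdot 2 \cdots (n-1) = (n-1)!$. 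Once principality of $\bar I^{(n-1)!}$ is established, lifting a generator to $h \in I^{(n-1)!}$ yields $J = (f_1, \ldots, f_{n-1}, h)$ generated by $n$ elements, and the set-theoretic consequence $\sqrt I = \sqrt J$ is immediate from the sandwich $I^{(n-1)!} \subseteq J \subseteq I$.
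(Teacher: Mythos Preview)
The paper does not prove this statement; it is quoted from Mandal--Roy \cite{M-R} (see also \cite{Man2}). So the comparison is really against the Mandal--Roy argument, and there your proposal has a genuine gap.

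Your stated role for the monic polynomial is wrong. The ring $R[X]$ is Noetherian because $R$ is, so Nakayama is available everywhere regardless; the monic polynomial is not needed ``to keep quotients Noetherian''. In the Mandal--Roy proof it is used for a quite different purpose: it forces $R[X]/I$ to be module-finite over $R$, which feeds a patching/finiteness argument (between a localization where $I$ becomes $(f_1,\dots,f_n)$ and a complementary one where it becomes the unit ideal) that your sketch never invokes.

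More seriously, the heart of your plan---that $\bar I^{(n-1)!}$ is principal in $\bar A = R[X]/(f_1,\dots,f_{n-1})$---is asserted but not established, and the idempotent manipulations you describe do not deliver it. From $\bar I=(\bar f_n,\bar e)$ with $\bar e-\bar e^2\in(\bar f_n)$ one gets $\bar e^j\equiv\bar e\pmod{\bar f_n}$ and hence, after simplifying the monomial generators of $\bar I^m$, only
\[
\bar I^m=\bigl(\bar f_n^{\,m},\ \bar e+\bar f_n\,g_m\bigr)
\]
for every $m\ge 1$: two generators, not one. Already for $n=2$ your claim would force $\bar I$ itself to be principal in $R[X]/(f_1)$, which does not follow from $\bar I=(\bar f_n)+\bar I^2$ alone. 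The step ``from $k$ generators to $k-1$'' that you allude to is precisely where the monic-polynomial hypothesis has to do real work, and your outline gives no mechanism for it. Without that, the induction cannot start, and the exponent $(n-1)!$ never gets explained beyond numerology.
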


The following result is due to Ferrand and Szpiro. For a proof see
\cite{Sz} or \cite{Mu1}.

\begin{theorem}\label{4t6}
Let $I$ be a locally complete intersection ideal of $R$ of height
$n\geq 2$ with $\dim (R/I)\leq 1$.  Then there is a locally complete
intersection ideal $J\subset R$ of height $n$ such that 

$(i)$ $\sqrt{I}=\sqrt{J}$ and

$(ii)$ $J/J^2$ is a free $R/J$-module of rank $n$.
\end{theorem}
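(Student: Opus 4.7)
The plan is to exploit that the conormal module $I/I^2$ is a projective $R/I$-module of rank $n$, and to modify $I$ within its radical class so that the conormal bundle becomes free.

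First, since $I$ is a locally complete intersection of height $n$, for every prime $\pri\supset I$ the localization $I_\pri$ is generated by a regular sequence of length $n$, so $(I/I^2)_\pri\cong (R/\pri)^n$. Hence $I/I^2$ is projective over $R/I$ of rank $n$. Since $\dim(R/I)\leq 1$ and $n\geq 2$, Serre's splitting theorem applied iteratively on $R/I$ yields a decomposition $I/I^2\cong (R/I)^{n-1}\oplus L$ for some line bundle $L$. Lift a basis of the free summand to $a_1,\ldots,a_{n-1}\in I$ and pick $b\in I$ whose class generates $L$, so that $I=(a_1,\ldots,a_{n-1},b)+I^2$; by Nakayama this gives $(a_1,\ldots,a_{n-1},b)R_\pri=IR_\pri$ for every prime $\pri\supset I$.

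The central step is a Ferrand-type construction: produce an ideal $J\subset I$ of the form $J=(a_1,\ldots,a_{n-1},b+h)+I^N$ for some $h\in I^2$ and $N\gg 0$, chosen so that $J$ is a locally complete intersection of height $n$, $\sqrt J=\sqrt I$, and the determinant line bundle $\det(J/J^2)$ is trivial. The trivialization of the line bundle uses crucially that $\dim(R/I)\leq 1$: the class of $L$ in $\mathrm{Pic}(R/I)$ can be killed by an explicit deformation of $b$ within $b+I^2$, because line bundles on a one-dimensional Noetherian scheme are represented by divisors supported on finitely many closed points, and these divisors can be adjusted by the correction $h$.

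Once $\det(J/J^2)$ is trivial, Serre's splitting on $R/J$ (still of dimension $\leq 1$) gives $J/J^2\cong (R/J)^{n-1}\oplus \det(J/J^2)\cong (R/J)^n$, the desired freeness of rank $n$. The main obstacle is the Ferrand construction itself: exhibiting the correction $h\in I^2$ and the exponent $N$ that simultaneously keep $J$ locally complete intersection with the same radical as $I$ while killing the Picard class of $L$. The full combinatorial details of this construction are carried out in \cite{Sz} and \cite{Mu1}.
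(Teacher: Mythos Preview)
The paper does not actually prove this statement: it attributes the result to Ferrand and Szpiro and simply refers the reader to \cite{Sz} and \cite{Mu1}. So there is no proof in the paper to compare your attempt against; in the end you too defer to the same two references.

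That said, your sketch of the argument is partly right and partly garbled. The opening is correct: for a locally complete intersection ideal $I$ of height $n$, the conormal module $I/I^2$ is projective of rank $n$ over $R/I$, and since $\dim(R/I)\leq 1<n$, Serre's splitting gives $I/I^2\cong (R/I)^{n-1}\oplus L$ for a line bundle $L$. What goes wrong is your description of the Ferrand step. The Ferrand ideal is \emph{not} of the form $(a_1,\ldots,a_{n-1},b+h)+I^N$; it is defined as the preimage in $I$ of the free summand under the projection $I\to I/I^2\to L$, so that $I^2\subset J\subset I$ (whence $\sqrt J=\sqrt I$ is automatic) and $I/J\cong L$. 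One then checks that $J$ is again locally complete intersection of height $n$ and relates $J/J^2$ to $I/I^2$ and $L$ via a short exact sequence of $R/J$-modules; iterating if necessary, one arrives at an ideal whose conormal module has trivial determinant, hence is free by Serre splitting over the one-dimensional ring $R/J$. Your claim that one ``kills the Picard class of $L$ by deforming $b$ within $b+I^2$'' cannot work as stated: replacing $b$ by $b+h$ with $h\in I^2$ leaves the image of $b$ in $I/I^2$ unchanged, so $L$ is untouched. The mechanism that changes the determinant is passing from $I$ to the genuinely different ideal $J$ sitting between $I^2$ and $I$, not perturbing a generator.
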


The following result is easy to prove, hence we omit the proof.

\begin{lemma}\label{4l1}
Let $f\in R[T]-R$. Then

$(i)$ If $I$ is a proper ideal of $
 R[T,f^{-1}]$, then  $\hh I= \hh (I\cap R[T])$.

$(ii)$ If $I$ is a proper ideal of $
 R[f,f^{-1}]$, then  $\hh I= \hh (I\cap R[f^{-1}])$.
\end{lemma}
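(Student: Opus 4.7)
The plan is to reduce both parts to the standard prime-ideal correspondence under localization at a single element, combined with a small saturation argument.

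For part (i), I will write $R[T,f^{-1}] = S^{-1}R[T]$ with $S=\{1,f,f^2,\ldots\}$. The contraction/extension bijection between primes of $R[T,f^{-1}]$ and primes of $R[T]$ disjoint from $S$ is inclusion-preserving; moreover, any prime sitting below a prime $\pri \not\ni f$ again avoids $f$, so chains of primes match exactly and heights are preserved, i.e.\ $\hh P = \hh(P\cap R[T])$ for every prime $P$ of $R[T,f^{-1}]$. Setting $J = I\cap R[T]$, the key point is that every minimal prime over $J$ avoids $f$. I will verify that $J$ is $f$-saturated: if $gf\in J$ for some $g\in R[T]$, then $g=(gf)f^{-1}\in I$, hence $g\in J$. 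Therefore $f$ is a non-zerodivisor modulo $J$ and so lies in no associated prime of $J$; in particular, no minimal prime over $J$ contains $f$. Combining these two observations, the minimal primes over $I$ in $R[T,f^{-1}]$ correspond, via contraction, to the minimal primes over $J$ in $R[T]$ with matching heights, giving $\hh I = \hh J$.

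For part (ii), the plan is to re-interpret $R[f,f^{-1}]$ as a localization and invoke (i). Since $f=(f^{-1})^{-1}$ and $f^{-1}\in R[f^{-1}]$, we have $R[f,f^{-1}] = R[f^{-1}][(f^{-1})^{-1}]$, i.e.\ the localization of $R[f^{-1}]$ at powers of $f^{-1}$. Applying the proof of (i) verbatim, with $R[T]$ replaced by the ring $R[f^{-1}]$ and $f$ replaced by the element $f^{-1}$, yields $\hh I = \hh(I\cap R[f^{-1}])$. Note that the argument of (i) does not actually use that $R[T]$ is a polynomial ring; it only uses a ring together with an element to be inverted, so the substitution is legitimate.

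The main (and modest) obstacle is the saturation step. Without it, a minimal prime over $J$ of the smallest height could in principle contain $f$ and fail to extend to $R[T,f^{-1}]$, leaving open the possibility $\hh J < \hh I$. Once saturation guarantees that every minimal prime over $J$ avoids $f$, the equality of heights falls straight out of the localization correspondence.
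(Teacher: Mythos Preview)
Your argument is correct. The paper itself omits the proof of this lemma, stating only that ``the following result is easy to prove, hence we omit the proof,'' so there is no proof in the paper to compare against; your approach via the standard prime correspondence under localization together with the $f$-saturation of $I\cap R[T]$ (so that no minimal prime over the contraction contains $f$) is exactly the kind of routine verification the authors had in mind, and your reduction of (ii) to (i) by rewriting $R[f,f^{-1}]=R[f^{-1}][(f^{-1})^{-1}]$ is clean and legitimate.
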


\begin{lemma}\label{4l3}
Let $I$ be an ideal of $A=R[T,f^{-1}]$, where $f\in R[T]-R$.  If
$J=I\cap R[f^{-1}]$, then $\hh J=\hh I$.
\end{lemma}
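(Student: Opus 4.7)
I would factor the containment $L \subset A$ through the intermediate ring $M := R[f,f^{-1}]$, giving $L \subset M \subset A$, and split the height comparison across the two extensions.

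Set $I' := I \cap M$, so that $J = I' \cap L$. Applying Lemma~\ref{4l1}(ii) to the proper ideal $I' \subset M$ immediately gives $\hh I' = \hh(I' \cap L) = \hh J$. It therefore suffices to prove $\hh I = \hh I'$.

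For this reduction, I would exploit the fact that (in the situations where this lemma is applied) $f$ is monic in $T$ of degree $n := \deg f$. Then the relation $f(T) - f = 0$ in $A$ is monic in $T$ with coefficients in $M$, so $A \cong M[T]/(f(T) - f)$ is a free $M$-module of rank $n$. Consequently $M \hookrightarrow A$ is finite and faithfully flat, which by incomparability (from integrality) and going-down (from flatness) yields $\hh \mathfrak{p} = \hh(\mathfrak{p} \cap M)$ for every prime $\mathfrak{p}$ of $A$. Moreover, the finite map $\Spec A \to \Spec M$ is closed, and a short computation with radicals ($\sqrt{I} \cap M = \sqrt{I \cap M}$) shows that it sends $V(I)$ onto $V(I')$; so every minimal prime of $I'$ in $M$ arises as $\mathfrak{p} \cap M$ for some prime $\mathfrak{p}$ of $A$ containing $I$. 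Combining these two facts, a prime-chasing argument gives $\hh I \geq \hh I'$ (from minimal primes of $I$) and $\hh I \leq \hh I'$ (from minimal primes of $I'$), hence $\hh I = \hh I'$ and finally $\hh J = \hh I$.

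The main obstacle is verifying the finite, free structure of $A$ over $M$, which rests on the leading coefficient of $f$ being a unit (monic being the natural and sufficient case). Without this integrality, $L \subset A$ is only flat rather than faithfully flat, and heights can strictly increase under contraction to $L$, so the argument would break down in that generality.
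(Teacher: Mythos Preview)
Your proof is correct for monic $f$ and shares the paper's core structure: both factor through $M = R[f,f^{-1}]$ and invoke Lemma~\ref{4l1}(ii) to pass from $M$ down to $L = R[f^{-1}]$. The divergence is entirely in the step $\hh I = \hh(I\cap M)$.

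You handle this step globally: with $f$ monic, $A$ is finite free over $M$, so faithful flatness plus incomparability gives height preservation in one stroke. The paper instead reduces first to the case where $I$ is prime, sets $\ma = I\cap R$, localizes so that $(R,\ma)$ is local, and then splits into the cases $I = \ma A$ and $I \neq \ma A$. In the nontrivial case it passes to the residue field $R/\ma$ and uses that $(R/\ma)[\bar f,\bar f^{-1}] \hookrightarrow (R/\ma)[T,\bar f^{-1}]$ is integral. The gain from this detour is that over a field \emph{any} polynomial of positive degree has unit leading coefficient, so integrality over the residue field holds with no global hypothesis on the leading coefficient of $f$. This is precisely what lets the paper state and prove the lemma for arbitrary $f\in R[T]\setminus R$.

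Your assertion that ``in the situations where this lemma is applied $f$ is monic'' is not accurate: Lemma~\ref{4l3} feeds into the construction of the element $1+fh$ in Proposition~\ref{4p2}, and that part of Proposition~\ref{4p2} is invoked in the proof of Theorem~\ref{T2}, which is stated for arbitrary $f\in R[Y]$ with no monicity assumption. So the extra generality is genuinely used. Your global-integrality argument does not extend there --- for instance, with $R=k[x]$ and $f=xT+1$ the extension $M\subset A$ is not finite --- whereas the paper's reduction to residue fields does. In short, your approach is a clean shortcut in the monic case, but the paper's prime-by-prime localization is what buys the full statement.
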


\begin{proof}
Assume that $I$ is a prime ideal. If we write $\ma =I \cap R$, then
$\hh I=\hh IA_{\ma}$ and $\hh J=\hh JR_{\ma}[f^{-1}]$.  Hence we
assume that $(R,\ma)$ is a local ring. Further if $I=\ma A$ is an
extended ideal, then $\hh I=\hh \ma=\hh J$. Hence assume that $I\neq
\ma A$. In this case $\hh I=\hh \ma +1$. Since $R/\ma$ is a field, we
get that $R/\ma[f,f^{-1}]\ra R/\ma[T,f^{-1}]$ is an integral
extension. Hence $\hh I/\ma=\hh \wt J/\ma$, where $\wt J=I\cap
R[f,f^{-1}]$. Therefore $\hh I=\hh \ma+1 =\hh \wt J=\hh J$, by
(\ref{4l1}). The general case follows by noting that $\hh I= \hh \sqrt
I$, $\sqrt I=\CP_1\cap\ldots \cap\CP_r$, $\sqrt J=\CP'_1\cap\ldots
\cap\CP'_r$, where $\CP_i'=\CP_i\cap R[f^{-1}]$
and $\hh \CP_i=\hh \CP_i'$.
$\hfill \gj$
\end{proof}

\begin{proposition}\label{4p2}
 Let $A=B[Y,f^{-1}]$, where $f\in R[Y]$ is a monic polynomial and $I$
 an ideal of $A$ of height $> d$. Then there exist an integer $N>0$
 such that for any set of integers $t_i,s_i,l_i$ all bigger than $N$,
 the $R[Y,f^{-1}]$-automorphism $\phi$ of $A$ defined by $\phi(X_i)=
 X_i+Y^{t_i}+f^{-s_i}$ and $\phi(Y_i)= Y_if^{l_i}$ satisfies the
 following:

$(i)$ $\phi(I)$ contains a monic polynomial in $Y$ with coefficients
 from $B$ and

$(ii)$ $\phi(I)$ contains a polynomial of the form $1+fh$ for some
 $h\in B[Y]$.
\end{proposition}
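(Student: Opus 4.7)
The plan is to adapt Suslin's monic polynomial argument (Theorem \ref{4t1}) and the Laurent polynomial technique (Lemma \ref{4l2}) to the ring $A=B[Y,f^{-1}]$. Both conclusions (i) and (ii) concern the contraction $\phi(I)\cap B[Y]$: since $f$ is monic in $Y$, $A$ is a localization of $B[Y]$ at $f$, so by a Laurent analogue of Lemma \ref{4l1}(i) this contraction has height $>d$ in $B[Y]$.

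The main step is to choose a single $g\in I\cap B[Y]$ suitably, via a Suslin/basic-element style argument using $\hh I>d=\dim R$, and then to compute $f^{M}\phi(g)$ for a well-chosen $M$. Writing $g=\sum c_{\alpha\beta e}\,X^{\alpha}\prod_{j}Y_{j}^{\beta_{j}}\,Y^{e}$ as a finite Laurent polynomial in the $Y_{j}$'s, the image is
\[
\phi(g)=\sum c_{\alpha\beta e}\,\Bigl(\prod_{i}(X_{i}+Y^{t_{i}}+f^{-s_{i}})^{\alpha_{i}}\Bigr)\Bigl(\prod_{j}(Y_{j}f^{l_{j}})^{\beta_{j}}\Bigr)Y^{e},
\]
and multiplying by a large enough power $f^{M}$ clears the denominators, producing an element of $\phi(I)\cap B[Y]$. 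For part (i), I would track the top $Y$-degree: for $t_{i}\gg 0$ chosen generically, the highest $Y$-degree term of $f^{M}\phi(g)$ is uniquely realised (exactly as in Suslin's argument in \ref{4t1}), and its coefficient can be arranged to be a unit in $B$ by the basic-element choice of $g$; dividing by this unit yields an element of $\phi(I)\cap B[Y]$ monic in $Y$.

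For part (ii), I would compute $f^{M}\phi(g)\bmod f$. A term in the trinomial expansion of $(X_{i}+Y^{t_{i}}+f^{-s_{i}})^{\alpha_{i}}$, together with the rescaling $Y_{j}\mapsto Y_{j}f^{l_{j}}$, carries an $f$-power of $-s\cdot c+l\cdot\beta$, so multiplying by $f^{M}$ and reducing modulo $f$ kills every term with $M+l\cdot\beta-s\cdot c\neq 0$. For $s_{i},l_{j}\gg 0$ chosen so that the quantities $s\cdot c-l\cdot\beta$ are pairwise distinct as $(c,\beta)$ range over the multi-indices appearing in $\phi(g)$, at most one contribution survives modulo $f$ for a given $M$; choosing $M=s\cdot\alpha^{*}$ with $\alpha^{*}$ the exponent of a $Y_{j}$-free monomial of $g$ with coefficient $1$ (again available by the basic-element argument), the surviving residue is exactly $1$. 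Hence $f^{M}\phi(g)=1+fh$ with $h\in B[Y]$.

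The main obstacle is the basic-element argument: from $\hh I>d$ one must produce a single $g\in I$ whose expansion contains a distinguished monomial $X^{\alpha^{*}}$ (coefficient $1$, no $Y_{j}$-factor) that simultaneously dominates the $Y$-degree analysis for (i) and survives alone modulo $f$ for (ii). The threshold $N$ is then determined by combining the numerical bounds from Theorem \ref{4t1} (for the $t_{i}$'s) with the genericity bound forcing the mod-$f$ surviving term to be unique (for the $s_{i}$'s and $l_{j}$'s).
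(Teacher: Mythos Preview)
Your single-element approach is a natural thing to try, but as written it has two real gaps.

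\textbf{Gap in part (ii).} In your mod-$f$ bookkeeping, you track the $f$-exponent $M+\sum_j l_j\beta_j-\sum_i s_ic_i$ of a term coming from the monomial $c_{\alpha\beta e}X^\alpha Y_j^\beta Y^e$ with trinomial split $(a,b,c)$. This exponent depends only on $(\beta,c)$, not on $e$ or on the split between $X_i$ and $Y^{t_i}$. So for your chosen $M=\sum_i s_i\alpha^*_i$, every monomial of $g$ of the form $c_{\alpha^*,0,e}X^{\alpha^*}Y^e$ survives modulo $f$ with $c=\alpha^*$, contributing $c_{\alpha^*,0,e}Y^e$. The residue is therefore $\sum_e c_{\alpha^*,0,e}Y^e\in R[Y]/(f)$, which is the full $X^{\alpha^*}Y_j^0$-coefficient of $g$ viewed as an element of $R[Y]$, reduced modulo $f$. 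There is no reason this should equal $1$; you would need this entire polynomial in $Y$ to be $\equiv 1\pmod f$, which is much stronger than ``the constant term is $1$''. The ``basic element'' heuristic does not produce such a $g$.

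\textbf{Gap in the existence of $g$.} You invoke a basic-element argument to obtain $g\in I\cap B[Y]$ containing a distinguished monomial $X^{\alpha^*}$ with unit coefficient and no $Y_j$-factor. But $\hh I>d$ does not give you a single element with a prescribed unit-coefficient monomial; what Suslin's argument gives is that \emph{after} the Nagata substitution the ideal contains a monic, which is a statement about the transformed ideal, not about a pre-chosen $g$. Your (i) has the same issue: you need the top $Y$-degree coefficient of $f^M\phi(g)$ to be a unit in $B$, which again demands a unit-coefficient monomial in $g$ that you have not produced.

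The paper sidesteps both problems by separating the two conclusions and using two contractions. For (ii) (with $n=0$) it contracts to $I_1=I\cap B[f^{-1}]$; the key point, proved as Lemma~\ref{4l3} using that $R[Y,f^{-1}]$ is integral over $R[f,f^{-1}]$, is that $\hh I_1=\hh I>d$. Now $B[f^{-1}]$ is a genuine polynomial ring over $R$ in $X_1,\ldots,X_m,f^{-1}$, so Suslin's Theorem~\ref{4t1} applies directly: the substitution $X_i\mapsto X_i+f^{-s_i}$ makes $I_1$ contain an $f^{-1}$-monic $F$ of some degree $u$, and then $f^uF=1+fh$ with $h\in B[Y]$. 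For (i) one then contracts the already-transformed ideal to $B[Y]$ (height preserved by Lemma~\ref{4l1}) and runs Suslin again with $X_i\mapsto X_i+Y^{t_i}$ to get a $Y$-monic; the two substitutions compose to $X_i\mapsto X_i+Y^{t_i}+f^{-s_i}$. The Laurent variables $Y_j$ are handled by induction on $n$ via the leading and trailing coefficient ideals $L_{Y_n}(I)$, $L_{Y_n^{-1}}(I)$, exactly as in Lemma~\ref{4l2}; the rescaling $Y_n\mapsto Y_nf^{l_n}$ enters only at this inductive step, where $f$-powers are used to dominate the $Y_n$-coefficients of two specific elements $F,G$ already known to have the right leading and constant terms.

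So the missing idea is the contraction to $B[f^{-1}]$ with height preserved (Lemma~\ref{4l3}), which lets you run Suslin in $f^{-1}$ and get (ii) cleanly, rather than trying to extract a mod-$f$ statement from a computation in $B[Y]$.
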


\begin{proof}
If $n=0$, then $B=R[X_1,\ldots,X_m]$.  If $I_1=I\cap B[f^{-1}]$, then
by (\ref{4l3}), $\hh I_1 = \hh I>d$. Hence by (\ref{4t1}), we can find
a positive integer $N_1$ such that for any integers $s_i>N_1$, if
$\phi_1$ is the $R[f^{-1}]$-automorphism of $B[f^{-1}]$
defined by $\phi_1(X_i)=X_i+f^{-s_i}$ for $1\leq i\leq m$, then
$\phi_1(I_1)$ contains a monic polynomial, say $F$ of degree $u$, in
the variable $f^{-1}$ with coefficients from
$B$. Since $\phi_1$ naturally extends to an
$R[Y,f^{-1}]$-automorphism of $A$, we get $\phi_1(I)$ contains $F$ and
hence it contains $f^uF$ which is of the form $1+fg$ for some $g\in
B[Y]$.

If $I_2=\phi_1(I)\cap B[Y]$, then by (\ref{4l1}), $\hh
I_2 = \hh I> d$. Hence by (\ref{4t1}), we can find a positive integer $N_2$
such that for any integers $t_i>N_2$, if $\phi_2$ is the
$R[Y]$-automorphism of $B[Y]$ defined by $\phi_2(X_i)=
X_i+Y^{t_i}$ for $1\leq i \leq m$, then $\phi_2(I_2)$ contains a monic
polynomial, say $G$, in the variable $Y$ with coefficients from
$B$. Since $\phi_2$ naturally extends to an
$R[Y,f^{-1}]$-automorphism of $A$, we get that
$\phi_2\phi_1(I)$ contains

$(i)$ a monic polynomial $G$ in the
variable $Y$ with coefficients from $B$ and 

$(ii)$ an element $1+fh$, where $h=\phi_2(g)\in B[Y]$.

Note that $\phi_2\phi_1$ is an $R[Y,f^{-1}]$-automorphism of $A$ defined
by $X_i\mapsto X_i+Y^{t_i}+f^{-s_i}$. This proves the result in case
$n=0$ by taking $N=max\{N_1,N_2\}$.

Assume $n>0$ and use induction on $n$. Define $L_{Y_n}(I)$ and
$L_{{Y_n}^{-1}}(I)$ as the set of highest degree coefficients and
lowest degree coefficients respectively of elements in $I$ as a
Laurent polynomial in the variable $Y_n$. It is easy to see that
$L_{Y_n}(I)$ and $L_{{Y_n}^{-1}}(I)$ are ideals of $C[Y,f^{-1}]$,
where $C=R[X_1,\ldots,X_m,Y_1^{\pm 1},\ldots,Y_{n-1}^{\pm 1}]$. By
(\cite{Man1}, Lemma 3.1), we get that height of the ideals
$L_{Y_n}(I)$ and $L_{{Y_n}^{-1}}(I)$ are $\geq \hh I$.

If we write $L=L_{Y_n}(I) \cap L_{{Y_n}^{-1}}(I)$, then $L$ is an
ideal of $C[Y,f^{-1}]$ of height $\geq \hh I>d$. Hence by
induction on $n$, there exist an integer $N_3$ such that for any set
of integers $t_i,s_i,l_i$ all bigger that $N_3$, if $\gt_1$ is an
$R[Y,f^{-1}]$-automorphism of $C[Y,f^{-1}]$ defined by $\gt_1(X_i)=
X_i+Y^{t_i}+f^{-s_i}$ and $\gt_1(Y_j)=Y_jf^{l_j}$ for $1\leq i \leq m$
and $1\leq j \leq n-1$, then $\gt_1(L)$ contains

$(a)$ a monic polynomial, say $\wt g$, in $Y$ with coefficients from
$C$ and 

$(b)$ a polynomial $\wt h$ of the form $1+fh'$ for some $h'\in
C[Y]$. 

We extend $\gt_1$ to an $R[Y_n^{\pm 1},Y,f^{-1}]$-automorphism of $A$.
We can find polynomials $F$ and $G$ in $\gt_1(I)$ of the form $F= \wt
gY_n^s+g_{n-1}Y_n^{s-1}+\cdots +g_0$ and $G=\wt
h+h_1Y_n+\cdots+h_tY_n^{t}$ for some $s,t\in \BN$, $g_i,h_i\in
C[Y,f^{-1}]$ and $\wt g,\wt h$ as in $(a)$, $(b)$. 
We can choose an integer $N_4>0$ such that for any integer
$l_n>N_4$, if $\gt_2$ is an $C[Y,f^{-1}]$-automorphism of
$A$ defined by $\gt_2(Y_n)= Y_nf^{l_n}$, then

$(i)$ $\gt_2(Y_n^{-s}F)$ is a monic polynomial in $Y$ with
coefficients from $C[Y_n^{\pm 1}]=B$ and

$(ii)$ $\theta_2(G)=1+fh$ for some $h\in B[Y]$.

We note that $\gt_2\gt_1$ is an $R[Y,f^{-1}]$-automorphism of $A$
defined by $X_i\mapsto X_i+Y^{t_i}+f^{-s_i}$ and $Y_j\mapsto
Y_jf^{l_j}$ for $1\leq i\leq m$ and $1\leq j\leq n$. Taking
$N=max\{N_3,N_4\}$ completes the proof.  $\hfill \gj$
\end{proof}


\section{Main Theorems}
In this section, we prove results stated in the introduction.

\subsection{Proof of Theorem \ref{T1}}
Without loss of generality, we may assume that $R$ is reduced.  If
$m=0$, then replacing $A$ by $A[X_1]$, we will assume that $m>0$.  Let
$P$ be a projective $A$-module of rank $r>d=\dim R$. We need to show
that $P$ has a unimodular element.  If $S$ denote the set of all
non-zerodivisors of $R$, then $S^{-1}R$ is a zero dimensional
ring. Therefore, by Dhorajia-Keshari (\cite{DK1}, Lemma 3.9),
$S^{-1}P$ is a stably free $S^{-1}A$-module. Hence by (\ref{DK2}),
$S^{-1}P$ is a free $S^{-1}A$-module.  Since $P$ is finitely
generated, we can find some $s\in S$ such that $P_s$ is a free
$A_s$-module of rank $r$.  By Lindel (\ref{4p1}), there exist an
integer $t>0$, $p_1,\ldots,p_r\in P$ and $\phi_1,\ldots,\phi_r \in
P^*$ such that the submodules $F= \sum_{i=1}^rAp_i$ of $P$ and $G=
\sum_{i=1}^{r}A\phi_i$ of $P^*$ satisfies the followings: $s^tP\subset
F$, $s^tP^*\subset G$ and the matrix
$(\phi_i(p_j))=diag(s^t,\ldots,s^t)$. The submodules $F$ and $G$ are
called $s^t$-dual submodules of $P$ and $P^*$ respectively.  Replacing
$s$ by $s^t$, we assume that $F$ and $G$ satisfies $sP\subset F$,
$sP^*\subset G$ and $(\phi_i(p_j))=diag(s,\ldots,s)$.

Since $A/(s(Y-1))=\wt R[X_1,\ldots,X_m,Y_1^{\pm 1},\ldots,Y_n^{\pm
    1}]$ is a Laurent polynomial ring over a $d$ dimensional ring $\wt
R:=R[Y,f^{-1}]/(s(Y-1))$, by Bhatwadekar-Lindel-Rao (\ref{4t3}),
$P/(s(Y-1))$ has a unimodular element.  Let $p\in P$ be such that its
image $\ol p$ in $P/s(Y-1)P$ is a unimodular element.

Let us write $\phi_i(p)=a_i\in A$ for $1\leq i \leq r$ and define
$b:=(1-Y)\prod_{i=1}^{m}X_i\prod_{j=1}^{n}Y_j$. Then $sb$ is a
non-zerodivisor in $A$. We can find an integer $l>deg(a_1)$ such that
$a_1':=a_1+s^2b^l$ is a non-zerodivisor in $A$, where $deg(a_1)$ is the
total degree of $a_1$ as a polynomial in $X_1,\ldots,X_m$ with
coefficients from $R[Y_1^{\pm 1},\ldots,Y_n^{\pm 1},Y,f^{-1}]$. Hence
height of the ideal $a_1'A$ is $\geq 1$.

Since $\ol p$ is a unimodular element in $P/s(Y-1)P$ and
$\phi_1,\ldots,\phi_r$ is a basis of the free module $P_s^*$, we get
that $(a_1,a_2,\ldots,a_r,s^2(Y-1)) \in \Um_{r+1}(A_s)$.  Since $a'\in
a_1+s^2(Y-1)A$, we get $(a_1',a_2,\ldots,a_r,s^2(Y-1))\in
\Um_{r+1}(A_{s})$. Hence by prime avoidance argument, we can choose
$c_2,\ldots,c_{r}$ in $A$ such that if $a_i'= a_i+s^2(Y-1)c_i$ for
$2\leq i \leq r$, then height of the ideal
$(a_1',\ldots,a_r')A_{s(Y-1)}$ is $\geq r $.  Let $l'>2\wt d$ be an
integer, where $\wt d$ is the maximum of total degrees of
$a_1',\ldots,a_r'$ as a polynomial in $X_1,\ldots,X_m$.
If we
write $a_r'':=a_r'+s^2(Y-1)(a_1')^{l'}$, then degree of $a_r''$ as a
polynomial in $X_1,\ldots,X_m$ is $e':=mll'$.

Let $q=c_2p_2+\cdots+c_{r-1}p_{r-1}+(c_r+(a_1')^{l'})p_r$. Then $\wt p:=p+
sb^lp_1+s(Y-1)q$ is also a lift of $\ol p$. Further we have
$\phi_i(\wt p)=a_i'$ for $1\leq i\leq r-1$ and $\phi_r(\wt
p)=a_r''$. Hence replacing $p$ by $\wt p$, we see that height of the
ideal $O_P(p)A_{s(Y-1)}=(a_1',\ldots,a_{r-1}',a_r'')A_{s(Y-1)}$ is $\geq r$.
 
Since $\ol p$ is a unimodular element in $P/s(Y-1)P$ and $p\in P$ is a
lift of $\ol p$, we get $O_P(p)+s(Y-1)A=A$. Further height of the
ideal $O_P(p)A_{s(Y-1)}$ is $\geq r$. Therefore we get that height of
the ideal $O_P(p)$ is $\geq r$.  By (\ref{4p2}), there exist an
integer $N>0$ such that for any integers $t',s',l''$ all bigger than
$N$, if $\Theta$ is the $R[Y,f^{-1}]$-automorphism of $A$ defined by
$\Theta(X_i)=X_i+Y^{t'}+f^{-s'}$ and $\Theta(Y_j)=Y_jf^{l''}$ for
$1\leq i\leq m$ and $1\leq j\leq n$, then the following holds:

$(a)$ $\Theta (O_P(p))$ contains a monic polynomial in $Y$ with
coefficients from $B$.

$(b)$ $\Theta (O_P(p))$ contains a polynomial $g\in B[Y]$ of the form
$1+fh$ for some $h\in B[Y]$.

Further if we choose $s'$ and $l''$ in the automorphism
$\Theta$ such that $s'>\frac{nl}{(ml-1)}l''$, then with
$e:=(ms'-nl'')ll'$, the following holds:

$(c)$ $f^{e}\Theta(a_i')\in B[Y]$ for $1\leq i\leq r-1$.

$(d)$ $f^{e}\Theta(a_r'')\in s^{2l'+2}\prod_1^n Y_i^{ll'}+fB[Y]$.

Parts $(a)$ and $(b)$ follows from (\ref{4p2}). For $(c)$, recall $l'>$  the 
maximum of total degrees of $a_1',\ldots,a_r'$, hence we only have to
ensure $e> l's'$. This is indeed the case because of our choice of $s'$.
Part $(d)$ is a direct consequence of the choice of $e$ and $s'$. 
 
Replacing $P$ by $\Theta(P)$, we assume that

$(a')$ $O_P(p)$ contains a monic polynomial in $Y$ with
coefficients from $B$.

$(b')$ $O_P(p)$ contains a polynomial $g\in B[Y]$ of the form
$1+fh$ for some $h\in B[Y]$. 

$(c')$  $f^{e}a_i'\in B[Y]$ for $1\leq i\leq r-1$.

$(d')$  $f^{e}a_r''\in s^{2l'+2}\prod_1^n Y_i^{ll'}+fB[Y]$.

We have $g=1+fh\in O_P(p)$ for some $h\in B[Y]$, hence
$A=B[Y]+gA$. Since $A=O_P(p)+s(Y-1)A$, using previous relation, we get
$A=O_P(p)+s(Y-1)B[Y]$. Therefore $B[Y]=A\cap B[Y]=O_P(p)\cap
B[Y]+s(Y-1)B[Y]$. Using $(a')$, we get $B[Y]/(B[Y]\cap O_P(p))$ is an
integral extension of $B/(B\cap O_P(p))$. Therefore, using
$B[Y]=O_P(p)\cap B[Y]+sB[Y]$, we get $B=(O_P(p)\cap B)+sB$. Hence we
get that 

$(i)$ $O_P(p)$ contains an elements $1+b's$ for some $b'\in B$.

$(ii)$ $O_P(p)$ contains an element $1+s(Y-1)a$ for some $a\in B[Y]$.

Let $\psi_1'$ and $\psi_2'$ in $P^*$ be such that $\psi'_1(p)=1+b's$
and $\psi'_2(p)=1+s(Y-1)a$. We can choose an integer $l_0>0$ such that
$f^{l_0}\,\psi_j'(p_i)\in B[Y]$ for $j=1,2$ and $1\leq i \leq r$.
Write $\phi_{r+j}=f^{l_0}\psi_j' \in P^*$ for $j=1,2$ and
$p_{r+1}=f^{e}p \in P$.

Consider the $B[Y]$-modules $M:=\sum_{i=1}^{r+1}B[Y]p_i$ and
$H:=\sum_{i=1}^{r+2}B[Y]\phi_i$. 
 We  have $\phi_i(p_j)\in B[Y]$ for $1\leq i\leq r+2$ and
$1\leq j\leq r+1$.  Further we have $M\otimes_{B[Y]} A\subset P$ and
$H\ot_{B[Y]} A\subset P^*$.

Since $sP\subset F$, we get $sp_{r+1}=\sum_{i=1}^rb_ip_i$ for some
$b_i\in A$ and hence $\phi_i(sp_{r+1})=sb_i$ for $1\leq i\leq
r$. Since $s$ is a non-zerodivisor in $A$, we get $b_i=\phi_i(p_{r+1})
\in B[Y]$. Therefore $sp_{r+1}=\sum_1^r \phi_i(p_{r+1})p_i$.  Hence if
we write $F':=\sum_{i=1}^rB[Y]p_i$, then we have $sp_{r+1}\in
F'$. Similarly if we write $G':=\sum_{i=1}^r B[Y]\phi_i$, then we get
$s\phi_{r+j}\in G'$ for $j=1,2$.  Therefore $M_s$ and $H_s$ are free
modules over $B_s[Y]$ with $M_s=F'_s$ and $H_s=G'_s$. Further $F'$ and
$G'$ are $s$-dual submodules of $M$ and $M^*$ respectively,
i.e. $sM\subset F'$, $sH\subset G'$ and the matrix
$(\phi_i(p_j))=diag(s,\ldots,s)$.
 
Let us define a $B$-algebra endomorphism $\gd:B[Y]\ra B[Y]$ by
$\gd|B=id$ and $\gd(Y)=1+(Y-1)(1-b'^2s^2)=Y+s^2b'^2(1-Y)$, where
$b'\in B$ is chosen earlier as $\phi_{r+1}(p)=f^{l_0}(1+b's)$. Since
$\gd(Y^t)-Y^t \in s^2B[Y]$ for all integers $t\geq 0$, we get that
$\gd(\ga)-\ga\in s^2B[Y]$ for any $\ga \in B[Y]$. Such an
endomorphism $\gd$ of $B[Y]$ is called $s^2$-{\it analytic} (see \cite{L95},
p 304). Recall that $M=\sum_1^{r+1}B[Y]p_i$ is a $B[Y]$-module. 

[Definition: We say
maps $\xi:M\ra M$ and $\xi^*:M^*\ra M^*$ are $\gd$-semilinear if $\xi$
and $\xi^*$ are group homomorphisms with respect to addition operation
and $\xi(\ga m)=\gd(\ga)\xi(m)$, $\xi^*(\ga\phi)=\gd(\ga)\xi^*(\phi)$ for any
$m\in M$, $\phi \in M^*$ and $\ga\in B[Y]$.]

Applying Lindel's result (\cite{L95}, Lemma 1.4) to above data, we get
$\gd$-semilinear maps $\xi:M\ra M$ and $\xi^*:M^*\ra M^*$ such that
$\xi^*(\phi)(\xi(p))=\gd(\phi(p))$ for any $\phi\in M^*$ and $p\in
M$. Further $\xi^*(H)\subset H$. Therefore $A\otimes_{B[Y]}
\xi^*(H)\subset P^*$. 

We have the followings:

$(i')$ $\phi_r(p_{r+1})=\phi_r(f^{e}p)=s^{2l'+2}\prod_1^n Y_i^{ll'}+f\wt b$ 
for some $\wt b\in B[Y]$, using $(d')$. 

$(ii')$
$\phi_{r+1}(p_{r+1})=f^{l_0}\psi'_1(f^{e}p)=f^{l_0+e}(1+b's)$ for
$b'\in B$, using $(i)$.

$(iii')$
$\phi_{r+2}(p_{r+1})=f^{l_0}\psi'_2(f^{e}p)=f^{l_0+e}(1+s(Y-1)a)$
for some $a\in B[Y]$, using $(ii)$.

Using the relation $\xi^*(\phi)(\xi(p_{r+1}))=\gd(\phi(p_{r+1}))$, we
see that the $\gd$-images of elements in $(i')-(iii')$ belong to
$O_M(\xi(p_{r+1}))$. Further using $A\otimes_{B[Y]} \xi(M)\subset P$
and $A\otimes_{B[Y]} \xi^*(H)\subset P^*$, we see that $\gd$-images of
above three elements belong to $O_P(1\ot \xi(p_{r+1}))$. We will show
that $1\otimes \xi(p_{r+1})$ is a unimodular element of $P$ by showing
that the $\gd$-images of above three elements generate the unit ideal.
Suppose not, then there exist a
maximal ideal $\mm$ containing elements 

$(i'')$ $\gd(s^{2l'+2}\prod_1^n Y_i^{ll'}+f\wt b)=s^{2l'+2}\prod_1^n Y_i^{ll'} + 
\gd(f)\gd(\wt b)$, 

$(ii'')$ $\gd(f^{l_0+e}(1+b's))= \gd(f)^{l_0+e}(1+b's)$ and

$(iii'')$
$\gd(f^{l_0+e}(1+s(Y-1)a))=\gd(f)^{l_0+e}(1+s\gd(Y-1)\gd(a))$.

Assume $\gd(f)\in \mm$. Then by $(i'')$, we get that $s\in \mm$ as
$Y_i$'s are units in $A$. Since $\gd(f)-f\in (s^2)$, we get $f\in
\mm$. This is a contradiction, since $f$ is a unit in $A$.  In the other
case, assume $\gd(f)\notin \mm$.  Then $1+s\gd(Y-1)\gd(a)\in
\mm$  and $1+b's\in \mm$. Since $\gd(Y-1)=(Y-1)(1-b'^2s^2)\in
(1+b's)A$, we get $\gd(Y-1)\in \mm$. This shows that $1\in \mm$,
a contradiction. Therefore we get that $1\otimes \xi(p_{r+1})$ is a
unimodular element. This completes the proof.  $\hfill \gj$

\subsection{Proof of Theorem \ref{T2}}

Without loss of generality, we assume $n\geq 1$. If $C:=
R[X_1,\ldots,X_m,Y_1^{\pm 1},\ldots,Y_{n-1}^{\pm 1}] $, then
$A=C[Y_n^{\pm 1},Y,f^{-1}]$ with $f\in R[Y]$. We are given a
surjection $\phi: P\ot (A/I) \surj I/I^2$, where $P=Q\op R$. We
want to show that $\phi$ can be lifted to a surjection $\Phi : P\ot A
\surj I$.

Let $\Phi_1:P\ot A\ra I$ be a lift of $\phi$.  We can find an integer
$k>0$ such that $f^{k}\Phi_1$ maps $P\ot C[Y_n^{\pm 1},Y]$ into
$J:=I\cap C[Y_n^{\pm 1},Y]$. Now $f^k \Phi_1$ induces a map $\psi
:P\ot (C[Y_n^{\pm 1},Y]/J) \ra J/J^2$.  Note that
$\psi_f=f^k\phi :P\ot (A/I) \surj
(J/J^2)_f$ is a surjection.

Note that in the proof of (\ref{4p2}, part $(i)$), $f$ being monic is
not needed. Using height of $I>d$ and applying (\ref{4p2}), we get an
$R[Y,f^{-1}]$-automorphism $\Theta$ of $A$ such that $\Theta(I)$
contains $1+fh$ for some $h\in C[Y_n^{\pm 1},Y]$.  Replacing $A$ by
$\Theta(A)$ and $I$ by $\Theta(I)$, we can assume that $1+fh\in
I$. Since $1+fh \in J$, we get $(J/J^2)_{1+fh}$ is the zero
module. Hence $\psi_{1+fh}$ is a surjections.  Applying (\ref{4p3}),
we get $\psi : P\ot (C[Y_n^{\pm 1},Y]/J) \ra J/J^2$ is a
surjection. If $\psi$ has a surjective lift $\Psi: P\ot C[Y_n^{\pm
    1},Y] \surj J$, then $f^{-k}\Psi_f :P\ot A \surj I$ will be our
required surjective lift of $\phi$. Therefore it is enough to show
that $\psi$ has a surjective lift from $P\ot C[Y_n^{\pm 1},Y]$ onto
$J$.

Note that $C[Y_n^{\pm 1},Y]=B[Y]$ is a Laurent polynomial ring over $R$ and
$J$ is an ideal of $C[Y_n^{\pm 1},Y]$ of height $>d=\dim R$.  By
(\ref{4l2}), there exist a $R[Y_n^{\pm 1}]$-automorphism $\Theta$ of
$C[Y_n^{\pm 1},Y]$ such that $\Theta(J)$ contains a monic polynomial
in $Y_n$ of the form $1+Y_nh'$ for some $h'\in C[Y,Y_n]$. Replacing
$J$ by $\Theta(J)$, we can assume that $J$ contains a monic polynomial
$1+Y_nh'$ in the variable $Y_n$.

Lift $\psi$ to a map $\Psi_1:P\ot C[Y,Y_n^{\pm 1}] \ra J$.  If we write
$K:=J\cap C[Y,Y_n]$, then $Y_n^{l}\Psi_1$ will map $P\ot C[Y,Y_n]$
into $K$ for some integer $l>0$. Now $Y_n^l \Psi_1$ will induce a map
$\gd: P\ot (C[Y,Y_n]/K) \ra K/K^2$ such that $\gd_{Y_n}=Y_n^l\psi$ is a
surjection from $P\ot (C[Y,Y^{\pm 1}_n]/J)$ onto $J/J^2$.
Since $K$ contains a monic polynomial $1+Y_nh'$ in $Y_n$, we get
$(K/K^2)_{1+Y_nh'}=0$. Applying (\ref{4p3}), we get that $\gd : P\ot
(C[Y,Y_n]/K) \surj K/K^2$ is a surjection. Applying Bhatwadekar-Raja
Sridharan (\ref{4p4}), we get that $\gd$ can be lifted to a surjection
$\Delta: P\ot C[Y,Y_n] \surj K$. Therefore $Y_n^{-l}\gD$ is a
surjective lift of $\psi$.  This completes the proof.  $\hfill \gj$

\subsection{Proof of Theorem \ref{T3}}

Without loss of generality we assume that $f\in R[Y]-R$. Let
$\Phi_1:P[Y,f^{-1}]\ra I$ be any lift of $\phi$. Then $\Phi_1(1)=\gd$
modulo $I(1)^2$.  Hence $\Phi_1(1)-\gd\in I(1)^2Hom(P,B)$.  Write
$\Phi_1(1)-\gd=f_1(1)g_1(1)\ga_1+\cdots+f_r(1)g_r(1)\ga_r$ for some
$f_i,g_i\in I$ and $\ga_i \in Hom(P,B)$.  If we write
$\Phi_2:=\Phi_1-(f_1g_1\wt\ga_1+\cdots +f_rg_r\wt\ga_r)$, where $\wt
\ga_i=\ga_i\ot id :P\ot_B A \ra A$, then $\Phi_2:P[Y,f^{-1}] \ra I$ is
also a lift of $\phi$ with $\Phi_2(1)=\gd$.
 
Let $J:=I\cap B[Y]$. Then there exist $k>0$ such that $f^k\Phi_2$ maps
$P[Y]$ into $J$. Now $f^k\Phi_2$ induces a map $\psi: P[Y]/JP[Y] \ra
J/J^2$.  Note that $\psi_f=f^k\phi :P[Y,f^{-1}]/IP[Y,f^{-1}] \surj
(J/J^2)_f$ is a surjection.

Since $\hh I>d$, by (\ref{4p2}), applying some
$R[Y,f^{-1}]$-automorphism of $A$, we may assume that $I$ contains
$(i)$ a monic polynomial $g$ in $Y$ with coefficients from $B$ and
$(ii)$ an element $1+fh$ for some $h\in B[Y]$. Since $1+fh\in J$, we
get $(J/J^2)_{1+fh}=0$. Therefore $\psi_{1+fh}$ is the zero map. By
(\ref{4p3}), we get $\psi : P[Y]/JP[Y] \surj J/J^2$ is a surjection.
Further $f(1)^k\gd :P \surj J(1)$ is a surjection with $\psi(1) =
f(1)^k \gd \ot B/J(1)$. Since rank of $P\geq \dim B[Y]/J+2$ holds and
$J$ contains monic polynomial $g$, using Mandal (\cite{Man}, Theorem
2.1), there exist a surjection $\Psi: P[Y]\surj J$ which is a lift of
$\psi$ and $\Psi(1)=f(1)^k\gd$. Therefore
$\Phi=f^{-k}\Psi_f:P[Y,f^{-1}] \surj I$ is a surjection which is a
lift of $f^{-k}\psi=\phi$ with $\Phi(1)=\gd$.  This completes the
proof.  $\hfill \gj$


\section{Applications}

Let $M$ be a finitely generated
$R$-module. If we write $\mu(M)$ for the minimum number of generators of
$M$ as an $R$-module, then F\"oster \cite{F67} and Swan \cite{Swan67}
proved that $\mu(M)\leq max\{\mu(M_{\pri})+\dim (R/\pri) | \pri \in
\Spec (R), M_{\pri}\neq 0\}$.  In particular, if $P$ is a projective
$R$-module of rank $r$, then $\mu(P)\leq r+d$. As a consequence of our
result (\ref{T1}), we prove the following result.

\begin{theorem}
Let $A=B[Y,f^{-1}]$ for some monic polynomial $f\in R[Y]$ 
and $P$ a projective $A$-module of rank $r$. Then
$\mu(P)\leq r+d$.
\end{theorem}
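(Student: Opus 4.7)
The strategy is to upgrade the Serre-dimension bound of Theorem \ref{T1} to a generator bound via a standard splitting-and-cancellation argument, with Theorem \ref{DK2} supplying the cancellation input. The case $r=0$ is trivial ($P=0$, so $\mu(P)=0\leq d$), so I would assume $r\geq 1$. Let $n=\mu(P)$ and fix a surjection $A^n\surj P$; since $P$ is projective this splits, giving $A^n\cong P\oplus K$ with $K$ a projective $A$-module of rank $n-r$. The goal is to show $n\leq r+d$.

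I would argue by contradiction: suppose $n\geq r+d+1$, so $\mathrm{rank}\,K=n-r\geq d+1$. By Theorem \ref{T1}, $K$ admits a unimodular element, hence $K\cong A\oplus K'$ with $K'$ projective of rank $n-r-1$. Substituting back yields $P\oplus K'\oplus A\cong A^n\cong A^{n-1}\oplus A$. Next I would invoke Theorem \ref{DK2} to cancel the free summand: our ring $A=R[X_1,\ldots,X_m,Y_1,\ldots,Y_n,Y,(Y_1\cdots Y_n f)^{-1}]$ is precisely of the form required there (rename $Y$ as $Y_{n+1}$; then $f_i=Y_i\in R[Y_i]$ for $1\leq i\leq n$ and $f_{n+1}=f\in R[Y_{n+1}]$), and $P\oplus K'$ has rank $n-1\geq r+d\geq d+1$, so cancellation gives $P\oplus K'\cong A^{n-1}$. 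The composition $A^{n-1}\surj P\oplus K'\surj P$ then shows $\mu(P)\leq n-1$, contradicting $\mu(P)=n$. Hence $n\leq r+d$.

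The main obstacle is essentially bookkeeping rather than conceptual: one must verify that $A$ fits the explicit hypothesis of Theorem \ref{DK2} and that the residual rank $n-1$ always lands in the range $\geq d+1$ where cancellation is valid, which is why the case $r=0$ must be separated at the start and why the assumption $r\geq 1$ propagates into the inequality $n-1\geq r+d\geq d+1$. Once these are in place the argument is mechanical.
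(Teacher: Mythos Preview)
Your argument is correct and follows essentially the same approach as the paper: split a surjection $A^s\surj P$ to get $A^s\cong P\oplus K$, use Theorem~\ref{T1} to find a unimodular element in the kernel $K$ (which has rank $>d$), and then apply the cancellation theorem~\ref{DK2} to drop one generator. The only cosmetic differences are that the paper phrases it as a descent (any $s>r+d$ can be reduced to $s-1$) rather than a contradiction starting from $s=\mu(P)$, and it applies cancellation to the free module $A^{s-1}$ rather than to $P\oplus K'$; your handling of the edge case $r=0$ and your explicit check that $A$ fits the hypothesis of~\ref{DK2} are fine additions.
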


\begin{proof}
Assume $P$ is generated by $s$ elements, where $s>r+d$. Then we will
show that $P$ is also generated by $s-1$ elements.  By F\"orster-Swan,
 we have $s\leq \dim A +r=d+m+n+1+r$. Let $\phi:A^s \surj P$ be a
surjection. If $Q$ is the kernel of $\phi$, then rank of $Q$ is
$s-r>d$. Hence by (\ref{T1}), $Q$ has a unimodular element, say $q\in
\Um(Q)$. Since $A^s\iso P\op Q$, we get $q\in \Um(A^s)$. Since
$\phi(q)=0$, $\phi$ induces a surjection $\ol \phi : A^s/qA \surj P$.
Since $s-1>d$, by (\ref{DK2}), $A^{s-1}$ is cancellative. Hence $A^s/qA\iso
A^{s-1}$. Therefore $P$ is generated by $s-1$ elements. This completes
the proof.
$\hfill \gj$
\end{proof}

\begin{proposition}\label{4t5}
  Let $A=B[Y,f^{-1}]$ for some $f\in R[Y]$.  Let $J\subset I$ be two
  ideals of $A$ such that $I=(f_1,\ldots,f_n)+ I^2$ and
  $J=(f_1,\ldots,f_{n-1}) + I^{(n-1)!}$. Assume that $I$ contains
  $(i)$ a monic polynomial $F\in C[Y]$ in the variable $Y$ and $(ii)$
  an element of the form $1+fh$ for some $h\in C[Y]$. Then $J$ is
  generated by $n$ elements. As a consequence, $I$ is
  set-theoretically generated by $n$ elements.
\end{proposition}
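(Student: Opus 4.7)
The plan is to reduce the problem from $A$ to the polynomial ring $B[Y]$ and apply Mandal--Roy (Theorem~\ref{4t4}). First, set $I_0 := I \cap B[Y]$ and observe that since $f$ is a unit in $A$, the equality $I = I_0 A$ holds: any $x \in I$ can be written as $x = y/f^m$ with $y \in B[Y]$, and then $y = f^m x$ lies in $I \cap B[Y] = I_0$. For each generator $f_i$ in the presentation $I = (f_1,\ldots,f_n) + I^2$, choose $g_i \in I_0$ with $g_i = f^{a_i} f_i$ for suitable $a_i \geq 0$; since $f$ is invertible in $A$, we still have $g_i A = f_i A$.

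The crux of the argument is to show that this presentation descends: $I_0 = (g_1,\ldots,g_n)B[Y] + I_0^2$ in $B[Y]$. Denote the right-hand side by $L$. Clearly $L \subseteq I_0$, and localizing at $f$ yields $LA = I_0 A = I$, so $I_0/L$ is $f$-torsion; as $I_0$ is finitely generated, there exists $N$ with $f^N I_0 \subseteq L$. On the other hand, $(1+fh)^2 = 1 + fh(2+fh)$ lies in $I_0^2 \subseteq L$, showing that $1 \equiv -fh(2+fh) \pmod{L}$, so $f$ is a unit in $B[Y]/L$. Hence multiplication by $f^N$ is both an isomorphism and the zero map on the $B[Y]/L$-module $I_0/L$, forcing $I_0/L = 0$, i.e.\ $I_0 = L$.

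With the descent in hand, $I_0 \subset B[Y]$ satisfies $I_0 = (g_1,\ldots,g_n) + I_0^2$ and contains the monic polynomial $F \in B[Y]$. Theorem~\ref{4t4} (applied with base ring $B$) gives that $J_0 := (g_1,\ldots,g_{n-1}) + I_0^{(n-1)!}$ is generated by $n$ elements in $B[Y]$. Extending scalars to $A$, and using $g_i A = f_i A$ together with $I_0^{(n-1)!} A = (I_0 A)^{(n-1)!} = I^{(n-1)!}$, we get $J_0 A = (f_1,\ldots,f_{n-1}) A + I^{(n-1)!} = J$, so $J$ is generated by $n$ elements. The set-theoretic generation of $I$ by $n$ elements then follows from $I^{(n-1)!} \subseteq J \subseteq I$, which yields $\sqrt{I} = \sqrt{J}$. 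The main obstacle is the descent step of the middle paragraph: without hypothesis $(ii)$, the $f$-torsion in $I_0/L$ could be a genuine obstruction, and it is precisely the element $1+fh$ that rules this out by making $f$ a unit modulo $L$.
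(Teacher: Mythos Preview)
Your proof is correct and follows essentially the same approach as the paper: pass to $I_0=I\cap B[Y]$, show the presentation $I_0=(g_1,\ldots,g_n)+I_0^2$ descends, and apply Mandal--Roy (Theorem~\ref{4t4}) to conclude that $J_0=(g_1,\ldots,g_{n-1})+I_0^{(n-1)!}$, and hence $J=J_0A$, is generated by $n$ elements. The only difference is cosmetic: for the descent step the paper invokes the fiber-product patching lemma (Proposition~\ref{4p3}) over the comaximal pair $f,\,1+fh$, whereas you argue directly that $I_0/L$ is $f$-torsion while $f$ is a unit in $B[Y]/L$---these are the same mechanism unpacked.
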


\begin{proof}
 Replacing $f_i$ by $f^Nf_i$ for some integer $N>0$, we may assume
 that $f_i\in B[Y]$ for all $i$.  Let $K=I\cap B[Y]$ be an ideal of
 $B[Y]$. Let $\phi:(B[Y]/K)^n \ra K/K^2$ be the map defined by
 $e_i\mapsto \ol f_i$. Then $\phi_f$ is surjective and $\phi_{1+fh}$
 is zero map, since $1+fh\in K$. Hence by (\ref{4p3}), $\phi$ is a
 surjection. Therefore, we get $K=(f_1,\ldots,f_n)+ K^2$.  If
 $L:=(f_1,\ldots,f_{n-1})+ K^{(n-1)!}$, then $L_f=J$. Since $K$
 contains a monic polynomial $F$, using (\ref{4t4}), we get that $L$
 is generated by $n$ elements.
Therefore $J$ is generated by $n$ elements.  
$\hfill \gj$
\end{proof}

\begin{theorem}
 Let $A=B[Y,f^{-1}]$ for some $f\in R[Y]$.  Let $J\subset I$ be two
 ideals of $A$ such that $I=(f_1,\ldots,f_n)+ I^2$ and
 $J=(f_1,\ldots,f_{n-1}) + I^{(n-1)!}$. Assume that height of $I>d$.
 Then $J$ is generated by $n$ elements. In particular, $I$ is
 set-theoretically generated by $n$ elements.
\end{theorem}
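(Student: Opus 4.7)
The plan is to bootstrap the previous proposition~\ref{4t5}: since \ref{4t5} already gives the conclusion under the additional hypotheses that $I$ contains a monic polynomial in $Y$ (with coefficients in $B$) and an element of the form $1+fh$, the job reduces to producing these two elements inside some automorphic image of $I$, which is precisely what \ref{4p2} delivers once $\hh I>d$.

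First I would handle the trivial case $f\in R$ (where $A$ is a polynomial ring over $R_f$ in the variables $X_i,Y_j^{\pm 1},Y$, and the hypothesis $1+fh\in I$ becomes automatic because $f$ is a unit, so \ref{4t5} applies after Suslin's monic polynomial theorem~\ref{4t1} produces a monic polynomial in $Y$). From now on assume $f\in R[Y]\setminus R$. By Proposition~\ref{4p2}, since $\hh I>d$, there is an $R[Y,f^{-1}]$-automorphism $\Theta$ of $A$ such that $\Theta(I)$ contains a monic polynomial $F\in B[Y]$ in $Y$ and an element of the form $1+fh$ with $h\in B[Y]$.

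Next I would transport the hypotheses along $\Theta$. Set $f_i':=\Theta(f_i)$, $I':=\Theta(I)$, and $J':=\Theta(J)$. Because $\Theta$ is an $A$-algebra automorphism,
\[
I'=(f_1',\ldots,f_n')+(I')^2,\qquad J'=(f_1',\ldots,f_{n-1}')+(I')^{(n-1)!},
\]
and $J'\subset I'$. Thus $I',J'$ satisfy exactly the hypotheses of Proposition~\ref{4t5}, so $J'$ is generated by $n$ elements; applying $\Theta^{-1}$ gives the $n$-generation of $J$ itself.

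Finally I would deduce the set-theoretic conclusion: from $I=(f_1,\dots,f_n)+I^2$ and Nakayama applied locally at primes containing $I$ one gets $\sqrt{I}=\sqrt{(f_1,\dots,f_n)}$, and then $J\supseteq I^{(n-1)!}$ together with $J\subseteq I$ forces $\sqrt{J}=\sqrt{I}$, so the $n$ generators of $J$ set-theoretically generate $I$. The only nontrivial step is the first: producing the simultaneous ``monic in $Y$'' and ``$1+fh$'' witnesses inside some automorphic image of $I$. That is exactly the content of~\ref{4p2}, and it is the main obstacle I would have to lean on; everything else is bookkeeping under the automorphism and an application of \ref{4t5}.
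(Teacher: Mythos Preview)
Your proof has a genuine gap in the invocation of Proposition~\ref{4p2}. That proposition is stated under the hypothesis that $f\in R[Y]$ is a \emph{monic} polynomial, whereas the present theorem allows an arbitrary $f\in R[Y]$. So you cannot simply cite~\ref{4p2} to obtain simultaneously a monic-in-$Y$ element and an element of the form $1+fh$ inside $\Theta(I)$. Concretely, in the inductive $n>0$ step of the proof of~\ref{4p2}, the automorphism $Y_n\mapsto Y_nf^{l_n}$ is used to push the $Y$-degree of the leading term above the others, and this relies on $f$ being monic in $Y$; for non-monic $f$ the resulting element need not be monic in $Y$ with coefficients in $B$.

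The paper handles this point differently. It observes that the $1+fh$ conclusion of~\ref{4p2} does not require $f$ monic (the paper writes ``part $(i)$'' but, as is clear from the identical remark in the proof of Theorem~\ref{T2}, means the $1+fh$ conclusion), and uses only that to arrange $1+fh\in I$. It then replaces each $f_i$ by $f^Nf_i$ so that $f_i\in B[Y]$, passes to $K=I\cap B[Y]$, and argues as in~\ref{4t5} that $K=(f_1,\dots,f_n)+K^2$. At this stage one is working entirely inside $B[Y]$, with no $f^{-1}$ present; since $\hh K=\hh I>d$, an automorphism of $B[Y]$ (a combination of~\ref{4t1} and~\ref{4l2}, or equivalently~\ref{4p2} with trivial $f$) produces a monic polynomial in $Y$ inside $K$ without any hypothesis on $f$. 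Mandal--Roy~(\ref{4t4}) then applies directly to $K$ and $L=(f_1,\dots,f_{n-1})+K^{(n-1)!}$, and localizing at $f$ yields the $n$-generation of $J$.

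In short, your strategy of transporting along an automorphism and invoking~\ref{4t5} is correct when $f$ is monic, but for general $f$ the monic-in-$Y$ witness must be manufactured \emph{after} descending to $B[Y]$, not in $A$ itself; that descent is the step your argument is missing. Your final paragraph on the set-theoretic consequence is fine.
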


\begin{proof}
Note that in the proof of (\ref{4p2}) part $(i)$, $f$ being monic is
not used. Hence applying an automorphism as in (\ref{4p2}), we may
assume that $I$ contains an element $1+fh$ for some $h\in B[Y]$.  Now
as in (\ref{4t5}), replacing $f_i$ by $f^Nf_i$, we may assume that
$f_i\in B[Y]$. Then if $K=I\cap B[Y]$, then $K=(f_1,\ldots,f_n)+K^2$
as in (\ref{4t5}). Since height of $K>d$, using some automorphism of
$B[Y]$, we may assume that $K$ contains a monic polynomial in
$Y$. Now, if $L=(f_1,\ldots,f_{n-1})+K^{(n-1)!}$, then by (\ref{4t4}),
$L$ is generated by $n$ elements. Hence $J=K_f$ is generated by $n$
elements.  $\hfill \gj$
\end{proof}

\begin{theorem}
  Let $A=B[Y,f^{-1}]$ for some $f\in R[Y]$ with further condition that
  $m+n\geq 1$. Let $I\subset A$ be a locally complete intersection
  ideal of height $r \geq max\{\dim A-1,\dim A-r+2\}$ with $\dim A/I
  \leq 1$. Then $I$ is set theoretically generated by $r$ elements.
\end{theorem}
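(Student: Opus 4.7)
The plan is to deduce the theorem from Ferrand--Szpiro (\ref{4t6}) together with Theorem~\ref{T2}. First observe that the hypotheses force $r\geq 2$: from $r\geq \dim A-r+2$ one gets $2r\geq \dim A+2$, and since $m+n\geq 1$ and $R$ is Noetherian, $\dim A=d+m+n+1\geq 2$, so $r\geq 2$. This is needed to legally invoke Ferrand--Szpiro.

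Step~1 will be to apply Theorem~\ref{4t6} to the locally complete intersection ideal $I\subset A$ of height $r\geq 2$ with $\dim A/I\leq 1$. This will produce a locally complete intersection ideal $J\subset A$ of height $r$ with $\sqrt I=\sqrt J$ and with $J/J^2$ a \emph{free} $A/J$-module of rank $r$. Since $\sqrt I=\sqrt J$, any set of $r$ generators of $J$ also generates $I$ up to radical, so it suffices to show that $J$ itself is generated by $r$ elements.

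Step~2 will use the freeness of $J/J^2$: a choice of basis yields a surjection $\phi\colon(A/J)^r\surj J/J^2$, and I would then lift $\phi$ to a surjection $\Phi\colon A^r\surj J$ by applying Theorem~\ref{T2} with $p=r$ and $P=A^r=A^{r-1}\oplus A$. To invoke~\ref{T2} one needs $r\geq\max\{\dim A-r+2,\,d+1\}$ and $\hh J\geq d+1$. The first inequality is hypothesized directly. For $r\geq d+1$: since $m+n\geq 1$ and $R$ is Noetherian, $\dim A=d+m+n+1$, so $r\geq\dim A-1=d+m+n\geq d+1$; consequently $\hh J=r\geq d+1$ is automatic. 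The resulting surjection $\Phi\colon A^r\surj J$ exhibits $J$ as generated by $r$ elements, and these same $r$ elements generate $\sqrt I=\sqrt J$, finishing the proof.

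The main obstacle is only bookkeeping the numerical inequalities so that both Ferrand--Szpiro and Theorem~\ref{T2} are applicable; there is no new conceptual content. Theorem~\ref{T2} was designed precisely to lift such $\phi$'s, and Theorem~\ref{4t6} is exactly the device that converts ``locally complete intersection'' into ``conormal module free'' at the price of passing to an ideal with the same radical. The genuine role of the hypothesis $m+n\geq 1$ is to push $\dim A$ above $d+1$, which is what secures $r\geq d+1$ from the given bound $r\geq \dim A-1$.
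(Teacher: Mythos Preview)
Your proposal is correct and follows exactly the paper's own proof: apply Ferrand--Szpiro (\ref{4t6}) to replace $I$ by an ideal $J$ with the same radical and free conormal module, then invoke Theorem~\ref{T2} (with $P=A^r$) to lift the $r$ generators of $J/J^2$ to $r$ generators of $J$. Your write-up is in fact more careful than the paper's, spelling out the numerical checks $r\geq 2$ and $r\geq d+1$ that the paper leaves implicit.
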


\begin{proof}
By Ferrand-Szpiro (\ref{4t6}), there is a locally complete
intersection ideal $J$ of height $r$ such that $(i)$
$\sqrt{J}=\sqrt{I}$ and $(ii)$ $J/J^2$ is a free $A/J$-module of rank
$r$. Since $m+n\geq 1$, we get $r\geq d+1$.  By
(\ref{T2}), the $r$ generators of free module $J/J^2$ can be lifted to
$r$ generators of $J$. Hence $I$ is set theoretically generated by $r$
elements.  $\hfill \gj$
\end{proof}

{\bf Acknowledgment:} The second author would like to thank C.S.I.R.
India for their fellowship.

{\small
{}

}


\begin{thebibliography}{}

\bibitem{B1}{} S.M. Bhatwadekar, {\it Inversion of monic polynomials
  and existence unimodular elements (II)}, Math. Z. {\bf 200} (1989)
  233-238.

\bibitem{BLR}{} S.M. Bhatwadekar, H. Lindel and R.A. Rao, {\it The
  Bass-Murthy question: Serre dimension of Laurent polynomial
  extensions}, Invent. Math. {\bf 81} (1985) 189-203.
  
 \bibitem{BR2} S.M. Bhatwadekar and Raja Sridharan, {\it The Euler class group
 of a Noetherian ring}, Compositio Math. {\bf 122} (2000) no. 2, 183-222. 
 
\bibitem{BR3} S.M. Bhatwadekar and Raja Sridharan, {\it On a question
  of Roitman}, J. Ramanujan Math. Soc. {\bf 16} (2001) no. 1, 45-61.
  
\bibitem{BR1} S.M. Bhatwadekar and Raja Sridharan, {\it On Euler
  classes and stably free projective modules}, Algebra, arithmetic and
  geometry, Part I, II (Mumbai, 2000), 139-158, Tata
  Inst. Fund. Res. Stud. Math., 16, Tata Inst. Fund. Res., Bombay,
  2002.

\bibitem{BhR}{} S.M. Bhatwadekar and A. Roy, {\it Some theorems about
  projective modules over polynomial rings}, J. Algebra {\bf 86}
  (1984) 150-158.

 \bibitem{D1}{} M.K. Das, {\it The Euler class group of a polynomial
 algebra}, J. Algebra {\bf 264} (2003), no. 2, 582-612.

\bibitem{DK1}{} A.M. Dhorajia and M.K. Keshari, {\it Projective modules over
overrings of polynomial rings.}, J. Algebra {\bf 323} (2010) 551-559.  

\bibitem{F67}{} Otto F\"oster, {\it \"Uber die Anzahl der Erzeugenden
  eines Ideals in einem Noetherschen Ring}, Math. Z. {\bf 84} (1964)
  80-87.

\bibitem{H}{} G. Horrocks, {\it Projective modules over an extension
  of a local ring}, Proc. London Math. Soc. (3) {\bf 14} (1964)
  714-718.
 
\bibitem{MD} S.D. Kumar and S. Mandal, {\it Some results on generators
  of ideals} J. Pure Appl. Algebra {\bf 169} (2002) no. 1, 29-32.

\bibitem{L95}{} H. Lindel, {\it Unimodular elements in
  projective modules}, J. Algebra {\bf 172} (1995) no-2, 301-319.
  
\bibitem{Man1}{} S. Mandal, {\it On efficient generation of ideals},
  Invent. Math. {\bf 75} (1984)  no. 1, 59-67. 
  
\bibitem{Man}{} S. Mandal, {\it Homotopy of sections of projective
  modules}, J. Algebraic Geom.  {\bf 1} (1992), no. 4, 639-646.
  
\bibitem{Man2}{} S. Mandal, {\it Projective modules and complete
 intersections}, Lecture Notes in Mathematics, {\bf 1672},
 Springer-Verlag, Berlin, 1997.
   
\bibitem{M-Pa}{} S. Mandal and K. Parker, {\it Vanishing of Euler class groups}, 
 J. Algebra {\bf 308} (2007), no. 1, 107-117.   
   
\bibitem{M-R}{} S. Mandal and A. Roy, {\it Generating ideals in
   polynomial ring}, Math. Z. {\bf 195} (1987) 15-20.

\bibitem{Mu1}{} M.P. Murthy, {\it Generators for certain ideals in
  regular rings of dimension three}, Comment. Math. Helv. {\bf 47}
  (1972), 179-184.

\bibitem{P}{} B. Plumstead, {\it The conjectures of Eisenbud and
Evans}, Amer. J. Math. {\bf 105} (1983) 1417-1433.

\bibitem{Qu}{} D. Quillen. {\it Projective modules over polynomial rings,}
Invent. Math. {\bf 36} (1976), 167-171.  

\bibitem{Rao82}{} R.A. Rao, {\it Stability theorems for overrings of
  polynomial rings II}, J. Algebra {\bf 78} (1982), 437-444.

\bibitem{Serre58}{} J.-P. Serre, {\it Modules projectifs et espaces
  fibr\'es \`a fibre vectorielle}, 1958 S\'eminaire P. Dubreil,
  M.-L. Dubreil-Jacotin et C. Pisot, 1957/58, Fasc. 2, Expos\'e 23 18
  pp.

\bibitem{Su}{} A.A. Suslin, {\it On structure of special linear group
  over polynomial rings}, Math. of USSR, Isvestija {\bf 11} (No. 1-3),
  (1977) 221-238 (English translation).

\bibitem{Swan67}{} R.G. Swan, {\it The number of generators of a
  module}, Math. Z. {\bf 102} (1967), 318-322.

\bibitem{Sz}{} L. Szpiro, {\it Equations defining space curves},
  Published for Tata Institute of Fundamental Research by
  Springer-Verlag (1979).

\end{thebibliography}
\end{document}